\newtheorem{thm}{Theorem}[section]
\newtheorem{prop}[thm]{Proposition}
\theoremstyle{definition}
\newtheorem{defn}[thm]{Definition}
\theoremstyle{remark}
\numberwithin{equation}{section}
\renewcommand{\leq}{\leqslant}
\renewcommand{\geq}{\geqslant}
\newcommand{\hc}{\mathbf{H}^2_{\mathbb{C}}}
\begin{document}
	
%	\date{\today}
	\date{Oct. 4, 2023}

	\title[]{The topology of the Eisenstein-Picard modular surface}
	\author[J. Ma]{Jiming Ma}
	\address{School of Mathematical Sciences, Fudan University, Shanghai, China}
	\email{majiming@fudan.edu.cn}
	
	\author[B. Xie]{Baohua Xie}
	\address{School of Mathematics, Hunan University, Changsha, China}
	\email{xiexbh@hnu.edu.cn}
	
	\keywords{Complex hyperbolic manifolds, modular group, modular surface, handle structures of 4-orbifolds}
	
	\subjclass[2010]{20H10, 57M50, 22E40, 51M10.}
	
	\thanks{Jiming Ma was  supported by NSFC (No.12171092), he is  also  a member of LMNS, Fudan University. Baohua Xie was supported by NSFC (No.11871202).}
	
    \begin{abstract}The Eisenstein-Picard modular surface $M$ is the quotient space of the complex hyperbolic plane  by the  modular group $\rm PU(2,1; \mathbb{Z}[\omega])$. We determine the global topology of  $M$ as a 4-orbifold.

    \end{abstract}

     \maketitle
	
\section{Introduction}

  Complex hyperbolic $n$-space ${\bf H}^{n}_{\mathbb C}$ is the unique complete simply
connected K\"ahler  $n$-manifold with all holomorphic  sectional curvatures $-1$. 
The   holomorphic isometry group of ${\bf H}^{n}_{\mathbb C}$ is  $\rm PU(n,1)$. The Riemannian sectional curvatures of ${\bf H}^{n}_{\mathbb C}$  vary between  $-1$ and $-1/4$. It turns out that   complex hyperbolic manifolds  are rather difficult to study.

Due to the
extremal properties of ratios of their Chern numbers, complex hyperbolic manifolds are also of special interest in complex geometry.  
One of the most important questions in complex hyperbolic geometry is
the existence  of (infinitely many commensurable classes of)  non-arithmetic complex hyperbolic lattices, but we know very few examples of them  \cite{DeligneMostow:1986, Deraux:2020, dpp:2016, dpp:2021,Margulis,Fisher:2021} now.

  Even through it is well-known   there are infinitely many commensurable classes of arithmetic  complex hyperbolic manifolds in any complex dimension at least two, but the explicit  algebraic, geometric and topological information of them are still very mysterious. For example, Picard modular surfaces  ${\bf H}^2_{\mathbb C} /\rm PU(2,1; \mathcal{O}_d)$ are the simplest complex hyperbolic lattices. But   the geometries and algebras  of them are achieved until recently only  for very few   $d$    \cite{deraux-Xu, FalbelFrancsicsParker, falbelparker, MarkPaupert}. Here $\mathcal{O}_d$ is the ring of integers in the imaginary quadratic number field $\mathbb{Q}(\sqrt{-d})$. For the algebras of fake complex projective planes, a tiny portions of the second type  arithmetic complex hyperbolic lattices, see 	\cite{CartwrightSteger,PrasadYeung}.  We still need more explicit examples of complex hyperbolic lattices before  sketching a global picture of them (if we can eventually).

In this paper, we begin the study of the topologies of complex hyperbolic lattices. We consider a complex dimension one example before outline our result. 

\subsection{A toy example} 

The 2-disk $\mathbb{D}^2$ can be  identified  as $$\{z \in \mathbb{C}: |z| \leq 1 \}.$$
For each $n\in \mathbb{Z}_{+}$,  there is a smooth 	$\mathbb{Z}_{n}=\langle f \rangle$-action on  $\mathbb{D}^2$, namely $$f(z)={\rm e}^{\frac{2 \pi {\rm i}}{n}} \cdot z . 
$$ The quotient space of  $\mathbb{D}^2$ by this $\mathbb{Z}_{n}$-action is a 2-orbifold $\mathcal{F}_n$, there is a $\mathbb{Z}_n$-coned (or $2\pi/n$-coned) point $c_{n}$ in it.

Consider the well-known  modular surface $\Sigma$, where $\Sigma$ is the quotient space  of the complex hyperbolic line   ${\bf H}^1_{\mathbb C}$ by the modular group $\rm PSL(2; \mathbb{Z})$.  Recall that $\rm PSL(2; \mathbb{Z})$ is generated by $a$ and $b$, with $a^2=b^3=id$ and $ab$ is parabolic.
 So $\Sigma$ is a ${\bf H}^1_{\mathbb C}$-orbifold, with a $\mathbb{Z}_2$-coned point,   a  $\mathbb{Z}_3$-coned point, and a cusp. Take a simple closed  curve  $C$ in $\Sigma$   surrounding the cusp.  Then $\Sigma -C \times(0, \infty)$ is a compact orbifold, the interior of it is topologically homeomorphic to $\Sigma$, so it is a \emph{compact core} of $\Sigma$. We can understand the topology of $\Sigma$ by understanding the topology of $\Sigma -C \times(0, \infty)$.

$\Sigma -C \times(0, \infty)$ can be obtained with
\begin{itemize} 
	\item first we take two  2-orbifolds  $\mathcal{F}_2$ and $\mathcal{F}_3$. Which are small closed neighborhoods of the $\mathbb{Z}_2$-coned point and     $\mathbb{Z}_3$-coned point in $\Sigma$ respectively;

	\item then we glue a disk $[0,1]\times [0,1]$ to $\mathcal{F}_2 \cup \mathcal{F}_3$  to get a connected orientable 2-orbifold: $\{0\}\times [0,1]$ is identified with an arc in the boundary of $\mathcal{F}_2$, and $\{1\}\times [0,1]$ is identified with an arc in the boundary of $\mathcal{F}_3$.
	\end{itemize}
So we view   $\mathcal{F}_2$ and $\mathcal{F}_3$ as orbifold 0-handles, the  glued disk $[0,1]\times [0,1]$ as a 1-handle. Then we give   $\Sigma -C \times(0, \infty)$ a topological  handle structure.  See Figure \ref{figure:realmodular} for the process to get  a  handle structure of  $\Sigma -C \times(0, \infty)$. Where the left subfigure of  Figure \ref{figure:realmodular} is  the Ford domain of  $\rm PSL(2; \mathbb{Z})$ on  ${\bf H}^1_{\mathbb C}$ (in the upper half space model).  The boundary of the Ford domain  consists of arcs of Euclidean circles with radius 1 centered  at integers. 
The reader may find it is helpful to keep this example in mind.

% The reader may also  refer to  Section \ref{sec:EPmodularhandle} 
%for details on handle theory of orbifolds. 

\begin{figure}
	\begin{center}
		\begin{tikzpicture}
		\node at (0,0) {\includegraphics[width=11cm,height=3.2cm]{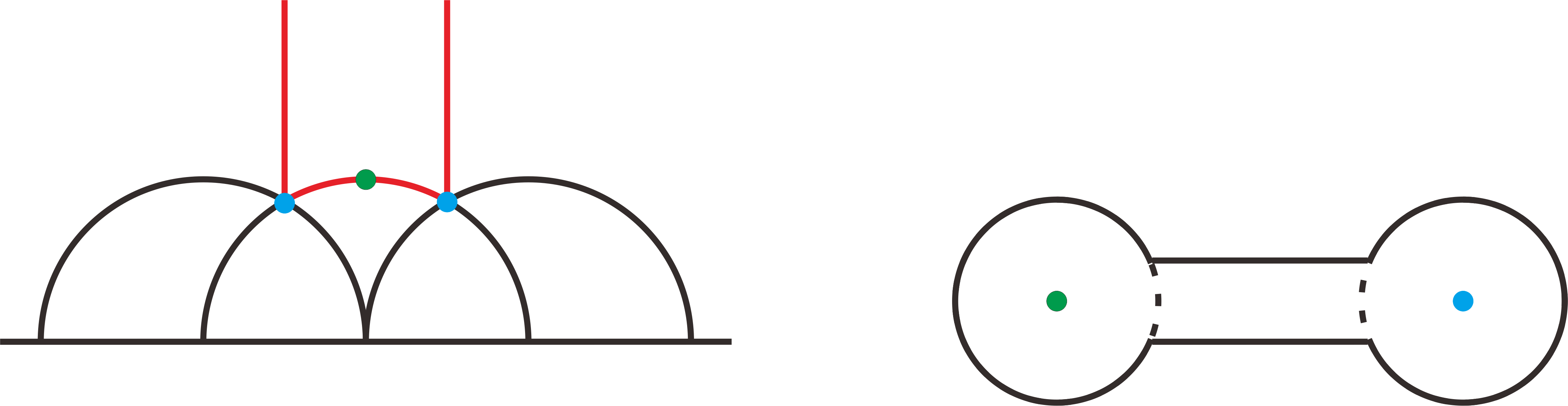}};
		
		\node at (1.9,-1.12){\tiny $\mathcal{F}_2$};
		\node at (4.8,-1.12){\tiny $\mathcal{F}_3$};

		\end{tikzpicture}
	\end{center}
	\caption{A handle decomposition of the   modular surface $\Sigma$. In the left sub-figure, the region bounded by the red arcs in a fundamental domain of $\rm PSL(2; \mathbb{Z})$'s action on  ${\bf H}^1_{\mathbb C}$. In the right sub-figure, we show the handle structure of a compact core of $\Sigma$. }
	\label{figure:realmodular}
\end{figure}

 In this paper, we will consider a similar problem for the more complicated ${\bf H}^2_{\mathbb C}$-orbifolds.

\subsection{Main result of the paper}

We determine the global topology of the  Eisenstein-Picard modular surface as a 4-orbifold  in this paper. 

Let $\rm PU(2,1; \mathbb{Z}[\omega]) \subset \rm PU(2,1)$ be the   Eisenstein-Picard modular group, where $\omega=(-1+ \rm{i} \sqrt{3})/2$ is a cube roots of unity. The group  $\rm PU(2,1; \mathbb{Z}[\omega])$ 
is a generalization of $\rm PSL(2; \mathbb{Z})$ into complex dimension two. We denote the Eisenstein-Picard modular surface by $M$,  which is the quotient space of the complex hyperbolic plane ${\bf H}^2_{\mathbb C}$ by the group $\rm PU(2,1; \mathbb{Z}[\omega])$. 

 Consider a \emph{compact core} $N$ of $M$. That is,  $N$ is a compact 4-orbifold with boundary a 3-orbifold, such that the interior of $N$ is homeomorphic to $M$.  Our main result is 
\begin{thm} \label{thm:modulartop} Let $M$ be the   Eisenstein-Picard modular surface, and $N$ a compact core of $M$.  Then $N$ can be obtained topologically  as:
\begin{enumerate}
\item [Step 1.]   Describe the topologies of four orbifold 0-handles $h(w_3)$, $h(w_4)$,  $h(w_{12})$ and $h(z_0)$. 

\item [Step 2.]  Attach  a 1-handle  $h([w_3,w_4])$ and  an    orbifold 1-handle  $h([z_0,w_{12}])$. 
\item [Step 3.] Attach an object $h([z_0,w_4,w_{12}])$,  the product of a  pair of   pants and the  2-orbifold $\mathcal{F}_2$, 
to get a 4-orbifold with 0-dimensional and  2-dimensional singular sets. 

\item [Step 4.]  Attach two  2-handles $$h([z_0,w_4,w_3,J(w_4)])\quad \text{and}  \quad \quad ~~~h([z_0,J(w_4), z_1,w_{12}]).$$

% $$h([z_0,J(w_4), z_1,w_{12}]),~~~h([z_0,w_4,w_3,w_{12}]);$$

 \item [Step 5.] Attach a 3-handle  $$h([w_{12}, z_0, z_1, z_2, (w_3, w_4, J(w_4), PJ(w_4))]).$$

\end{enumerate}	

\end{thm}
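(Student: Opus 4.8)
The plan is to start from an explicit fundamental polytope for $\Gamma=\mathrm{PU}(2,1;\mathbb{Z}[\omega])$ acting on $\mathbf{H}^2_{\mathbb{C}}$ and to read the handle structure of the compact core off the combinatorics of this polytope together with its side-pairings. Concretely I would use the Ford domain $D$ constructed by Falbel and Parker \cite{falbelparker}: a polytope bounded by finitely many bisectors, with a single cusp (the class number of $\mathbb{Q}(\sqrt{-3})$ is $1$), whose combinatorial structure and face-identifications are described there. The points $w_3,w_4,w_{12},z_0,z_1,z_2,J(w_4),PJ(w_4),\dots$ appearing in the statement are representatives of the $\Gamma$-orbits of vertices of $D$, and $J,P$ are the side-pairing generators. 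The cusp stabilizer $\Gamma_\infty$ is a lattice in the Heisenberg group extended by a finite rotation group; fixing a small $\Gamma$-invariant horoball $B$ about the cusp and setting $D_\varepsilon=D\setminus \mathrm{int}(B)$ gives a compact polytope whose boundary is the union of the (truncated) bisector side-faces of $D$ together with one face lying on $\partial B$. By Poincar\'e's polyhedron theorem, whose hypotheses are checked in \cite{falbelparker}, the quotient $D/\!\!\sim$ is $M$; performing the same identifications on $D_\varepsilon$ produces a compact $4$-orbifold $N$ whose boundary is the quotient of the horospherical face (a compact $\mathrm{Nil}$ $3$-orbifold). Since $B$ is a product neighbourhood of the cusp, $M$ is $N$ with a collar $\partial N\times[0,\infty)$ glued on, so $\mathrm{int}(N)\cong M$ and $N$ is a compact core.

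Next I would convert the identified cell complex $D_\varepsilon/\!\!\sim$ into the handle decomposition of Steps 1--5. Each orbit of vertices of $D_\varepsilon$ carries a finite stabilizer $\Gamma_v$, and a small regular neighbourhood of its image in $N$ is an orbifold $0$-handle, namely the cone on $S^3/\Gamma_v$; computing these stabilizers gives $h(w_3),h(w_4),h(w_{12}),h(z_0)$ (the remaining vertices, such as $z_1,z_2,PJ(w_4)$, lie on attaching regions of later handles and get absorbed). Edges joining these vertices thicken to $1$-handles $D^1\times D^3$, or, when the edge stabilizer is nontrivial, to orbifold $1$-handles $D^1\times(D^3/\Gamma_e)$; the two relevant edge-orbits give $h([w_3,w_4])$ and $h([z_0,w_{12}])$ in Step 2. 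The object in Step 3 reflects that one $2$-face of the quotient is topologically a pair of pants $\mathcal{P}$, three of whose boundary arcs already lie in the part built so far, and whose normal structure in $N$ carries a $\mathbb{Z}_2$-stabilizer; its thickening is therefore not a union of ordinary $2$-handles but the $4$-orbifold $\mathcal{P}\times\mathcal{F}_2$, attached along $\partial\mathcal{P}\times\mathcal{F}_2$, which introduces the $2$-dimensional part of the singular set. The remaining two $2$-face orbits thicken to honest $2$-handles $D^2\times D^2$, attached along the quadrilateral curves $[z_0,w_4,w_3,J(w_4)]$ and $[z_0,J(w_4),z_1,w_{12}]$ in the $1$-skeleton built so far (Step 4), and the unique top-dimensional cell of $D_\varepsilon/\!\!\sim$ not meeting $\partial N$ thickens to the $3$-handle $D^3\times D^1$ of Step 5. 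There is no $4$-handle, as $\partial N\neq\emptyset$.

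To make the stepwise build-up rigorous I would choose a Morse-type ordering of the cells of $D_\varepsilon$ (equivalently, a sequence of elementary expansions and collapses starting from the horospherical face) so that the partial unions $N_0\subset N_1\subset N_2\subset\dots$ obtained by adjoining one cluster of cells at a time are compact $4$-suborbifolds, each contained in the next as a deformation retract, with the next cluster attached transversally to a regular neighbourhood of a core subcomplex. At every stage the attaching map is computed from the face-pairing data and from the lattice $\Gamma_\infty$, and the resulting piece is identified up to homeomorphism; the handle index equals the cell dimension because the side-pairings preserve the cell structure of $D$. Carrying this out in the order dictated by the combinatorics of $D_\varepsilon$ yields precisely Steps 1--5.

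The main obstacle is the bookkeeping inherent in the Falbel--Parker polytope: it has many faces in each dimension, the side-pairings scramble their orbits, and one must (i) compute every vertex- and edge-stabilizer to locate the orbifold loci correctly, (ii) describe the cusp cross-section $\partial N$ explicitly and justify the product-neighbourhood claim, and (iii) verify that the attaching curves and surfaces of Steps 2--5 are exactly the ones named and lie inside the previously built part, i.e.\ that the chosen cell-ordering really is a filtration by handles. The most delicate point is Step 3: recognizing that the relevant $2$-face is a pair of pants and that its thickening is $\mathcal{P}\times\mathcal{F}_2$ rather than a disjoint collection of ordinary $2$-handles is where the $2$-dimensional singular stratum of $N$ first appears, and it requires a careful local analysis of the stabilizers along that face.
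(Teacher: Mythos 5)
Your overall strategy---truncate the cusp of the Falbel--Parker fundamental polytope, quotient by the side-pairings to get the compact core $N$, and then read the handle decomposition off the cells of the resulting complex---is the same template the paper follows. But there is a genuine gap at the very start that would derail the rest of the argument: you assert that $w_3$, $w_4$, $w_{12}$ ``are representatives of the $\Gamma$-orbits of vertices of $D$,'' and you propose to get the orbifold $0$-handles by taking regular neighbourhoods of these vertices. That is not the situation. The Falbel--Parker fundamental domain $D$ is, combinatorially, a $4$-simplex with exactly five vertices $z_0,z_1,z_2,z_3,q_\infty$; the points $w_3$, $w_4$, $w_{12}$ are \emph{not} vertices of $D$. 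They are fixed points of isolated isotropy subgroups of order $3$, $4$, $12$ that sit in the interiors of a ridge and two edges of $D$ respectively (e.g.\ $w_{12}$ is the midpoint of $[z_0,z_3]$ fixed by $\langle PQ^{-1},R\rangle$, $w_3$ lies inside the triangle $[z_0,z_1,z_2]$ fixed by $J$, and $w_4$ lies inside $[z_0,z_2]$ fixed by $R_1R_2R_3$). Consequently the cells of $D$ itself do not align with the singular locus of $M$, and one cannot read the $0$-handles $h(w_3),h(w_4),h(w_{12})$ off the vertices of $D$.

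The missing idea is the subdivision $D\rightsquigarrow D^*$: one must first insert $w_3$, $w_4$, $w_{12}$ (and their orbit-mates $J(w_4), PJ(w_4), P^2J(w_4), P(w_4), P(w_3)$) as new vertices, add edges such as $[w_3,w_4]$, $[z_2,w_{12}]$, the triangle $[z_1,z_2,w_{12}]$ in the mirror of $R$, split the triangular ridges into quadrilaterals, and split the finite $3$-cell in two, so that every isolated isotropy point and every totally geodesic mirror piece becomes a bona fide cell. Only after this subdivision does the ridge-cycle / side-pairing bookkeeping produce the cell orbits listed in the theorem ($[z_0,w_4,w_3,J(w_4)]$, $[z_0,J(w_4),z_1,w_{12}]$, the pair-of-pants $2$-faces inside the mirrors of $R$ and $PQ^{-1}$, etc.), and only then do the regular-neighbourhood thickenings yield the handles of Steps $1$--$5$ with the claimed attaching data. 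You correctly flag ``bookkeeping'' and ``computing every vertex- and edge-stabilizer'' as the hard part, but without performing the subdivision first, that bookkeeping has nothing to act on: the cell structure of $D$ simply does not contain the cells your Steps $1$--$4$ refer to. A related slip: you describe $D$ as a polytope with ``many faces in each dimension''; the Falbel--Parker $D$ is, on the contrary, a $4$-simplex, and the complexity you anticipate only appears after the subdivision to $D^*$.
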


Any one of the notations in Theorem  \ref{thm:modulartop},  such as $h(z_{i})$, $h(w_{j})$ and $h([z_0,J(w_4), z_1,w_{12}])$ etc.,   has a  geometrical meaning. Which are  related to cells of a fundamental domain $D^{*}$ of  $\rm PU(2,1; \mathbb{Z}[\omega])$, see  Subsection \ref{subsection:handleEPmodular}
 for the details.  For more on orbifold $k$-handles, see Subsection  \ref{subsection:handle}. 
 
 Handle theory is a standard tool for the study of topologies of  manifolds. By the well-known Selberg's Lemma, there is a finite cover $M'$  of $M$ such that  $M'$ is a 4-manifold (so without singular set).   But there is an order 72 torsion subgroup of  $\rm PU(2,1; \mathbb{Z}[\omega])$, so the covering degree $M^{'} \rightarrow M$ is at least 72.  In turn  the topology of $M'$ seems complicated (to the authors). So it seems  better to study the topology of $M$ instead of $M'$. Then we introduce  the terms orbifold $k$-handles for $k=0,1,2$, which are   mild (or even trivial) generalizations of $k$-handles. Moreover, we believe orbifold $k$-handles are  natural and convenient tools to study the topologies  of 4-orbifolds. We  hope this will not plague the reader.

The procedure  to get the topology of $N$ in   Theorem \ref{thm:modulartop} stage by stage  is sketched in Figure \ref{figure:EPmodular}. 

In Step 1, each of $w_3$, $w_4$, $w_{12}$ and $z_0$ is the fixed point of an isotropy group of order $3$, $4$, $12$ and $72$ respectively. The  associated  orbifold 0-handles  are just small closed neighborhoods of  them in $M$. See   Subsections  \ref{subsection:w3}, \ref{subsection:w4}, \ref{subsection:w12}, \ref{subsection:z0} for the topologies of these orbifold 0-handles.

In Step 2,    the  1-handle  $h([w_3,w_4])$ is simple, we just attach $\mathbb{D}^1 \times \mathbb{D}^3$ to  orbifold 0-handles $h(w_3)$, $h(w_4)$ along $\partial \mathbb{D}^1 \times \mathbb{D}^3$. The orbifold 1-handle  $h([z_0,w_{12}])$ is  a little tricky, which is  $\mathbb{D}^1 \times (\mathbb{D}^1 \times \mathcal{F}_6)$. So there is  $\mathbb{Z}_6$-coned singular set in it.  Moreover, the $\mathbb{Z}_6$-coned singular sets  of    $\partial \mathbb{D}^1 \times (\mathbb{D}^1 \times \mathcal{F}_6)$ are glued to  (subsets of) $\mathbb{Z}_6$-coned singular sets of the boundaries of $h(w_4)$ and $h(w_{12})$ respectively.

In Step 3, $h([z_0,w_4,w_{12}])$ is the product of a   pair of  pants and  the 2-orbifold $\mathcal{F}_2$, so it  is a $\mathbb{Z}_2$-coned 4-orbifold.  Three  copies of $\mathbb{S}^1 \times \mathcal{F}_2$  are gluing regions of   $h([z_0,w_4,w_{12}])$,  which are glued to $\mathbb{Z}_2$-coned singular sets of the boundaries of  $h(w_4)$, $h(w_{12})$ and  $h(z_{0})$ respectively. 

%$h([z_0,J(w_4), z_1,w_{12})$ and  $h([z_0,w_4,w_3,w_{12}])$

In Step 4,  the 2-handles  $h([z_0,w_4,w_3,J(w_4)])$ and $h([z_0,J(w_4), z_1,w_{12}])$  are attached  along two well-chosen  curves on the 4-orbifold obtained after Step 3. These information can be formulated  in the  fundamental domain $D^{*}$ (in Section  \ref{sec:EPmodularhandle} ) of  ${\bf H}^2_{\mathbb C}/\rm PU(2,1; \mathbb{Z}[\omega])$.

The 3-handle in Step 5 does not affect the fundamental group of the 4-orbifold after step 4. 

%%well-understood 

On page 2 of \cite{deraux},  for another Picard modular surface ${\bf H}^2_{\mathbb C}/\rm PU(2,1; \mathcal{O}_7)$, M. Deraux  wrote "This gives us some detailed information about the local structure of the quotient orbifold......, its global structure is still not understood ".    We believe Theorem \ref{thm:modulartop}  answers Derarux's question for ${\bf H}^2_{\mathbb C}/\rm PU(2,1; \mathcal{O}_3)$.  Moreover, 	there are some papers which relating some  ${\bf H}^2_{\mathbb C}$-lattices to other (in some sense simple) complex surfaces (and curve arrangements in   complex surfaces) \cite{deraux2018, deraux2019, Koziarz-RR}.  To our knowledge,  Theorem \ref{thm:modulartop}  is one of the very few results (if not the first one) about explicit topology structures of ${\bf H}^2_{\mathbb C}$-lattices.

\begin{figure}
	\begin{center}
		\begin{tikzpicture}
		\node at (0,0) {\includegraphics[width=8cm,height=11cm]{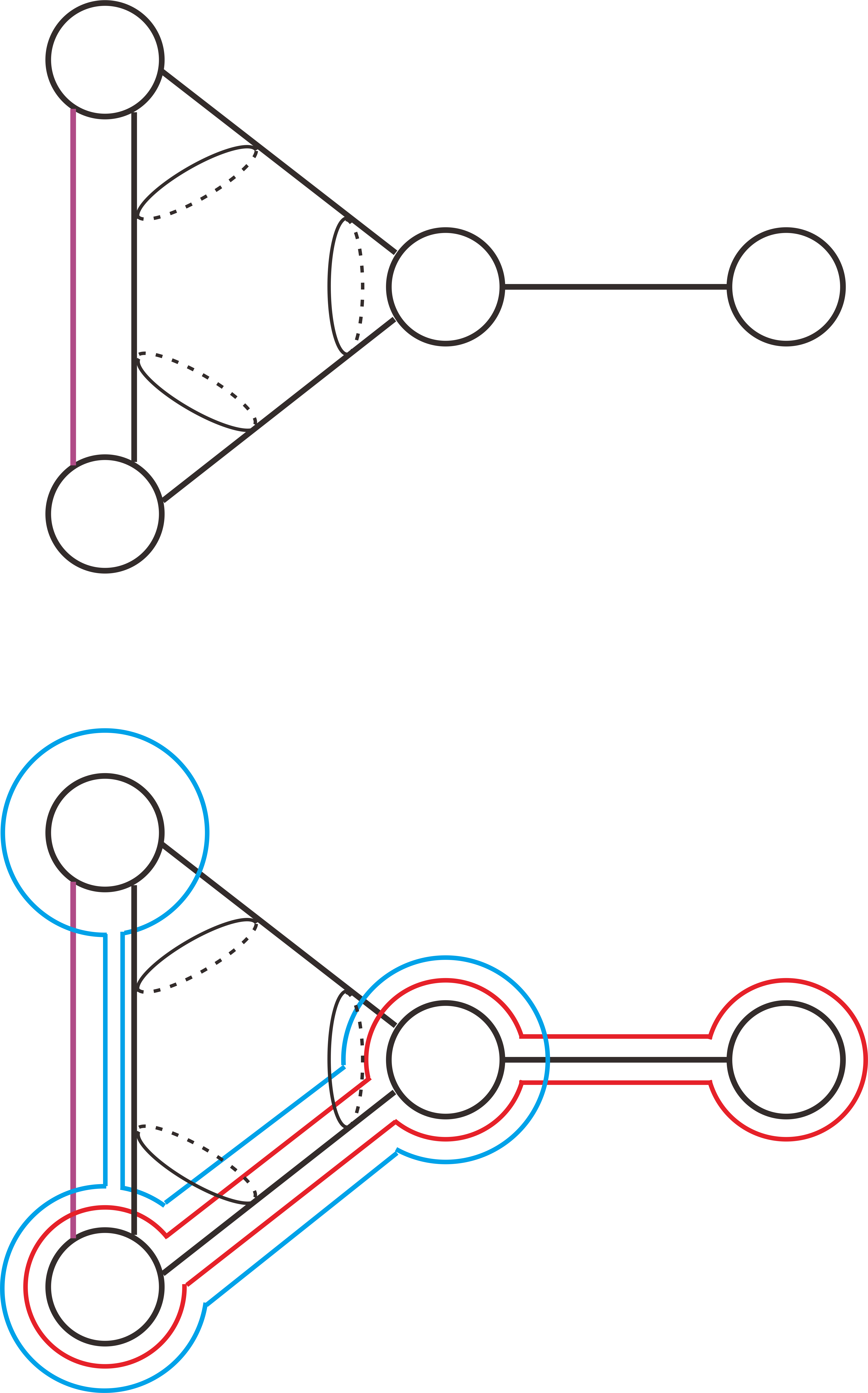}};
		
		\node at (0.1,3.2){\large $w_4$};
		\node at (0.1,2.2){\tiny $\langle R_1R_2R_3 \rangle $};

		\node at (3.2,3.2){\large $w_3$};
			\node at (3.2,4.2){\tiny $\langle RP \rangle $};
		
			\node at (-3.1,4.95){\large $w_{12}$};
			\node at (-1.3,4.95){\tiny $\langle PQ^{-1}, R\rangle$};
				\node at (-3.1,1.3){\large $z_0$};
				\node at (-1.1,0.7){\Tiny $\langle R_1, R_2J^2, R_2R_3R^{-1}_2 \rangle $};
				
				\node at (-1.85,3.3){\tiny $[z_0,w_4,w_{12}]$};
				
				\node at (-4.01,3.3){\Tiny $[z_0,w_{12}]$};

			\node at (0.1,-2.9){\large $w_4$};
				\node at (3.2,-2.9){\large $w_3$};
					\node at (-3.1,-1.1){\large $w_{12}$};
						\node at (-3.1,-4.7){\large $z_0$};
		\end{tikzpicture}
	\end{center}
	\caption{A handle decomposition of the  Eisenstein-Picard modular surface. The upper sub-figure is the 4-orbifold after Step 3 of  Theorem \ref{thm:modulartop}. There is a pair of $\mathbb{Z}_2$-coned totally geodesic pants $[z_0,w_4,w_{12}]$ in it. We also list the isotropic groups for $z_0$, $w_3$, $w_4$ and $w_{12}$ in the  upper sub-figure.  The $[z_0,w_{12}]$ labeled arc indicates the $\mathbb{Z}_6$-orbifold 1-handle.  In the bottom sub-figure, we attach two 2-handles along the blue and the red curves. There is no information of the  3-handle in this figure since a 3-handle  is really simple.}
	\label{figure:EPmodular}
\end{figure}

%In our viewpoint, 

Our proof of Theorem \ref{thm:modulartop} depends essentially on the fundamental domain $D$ of   $\rm PU(2,1; \mathbb{Z}[\omega])$'s action on ${\bf H}^2_{\mathbb C}$  by Falbel-Parker \cite{falbelparker}.  The fundamental domain $D$ by Falbel-Parker, combinatorically a 4-simplex, is extremely simple. But at the same time, it misleads the complexity  of the  Eisenstein-Picard modular surface. There are three 
isolated points of isotropy groups, namely $w_3$, $w_4$ and $w_{12}$,  hiding in the edges and ridges of the 4-simplex $D$.  So we subdivide $D$ into a new fundamental domain $D^{*}$. With the help of $D^{*}$, we can get Theorem \ref{thm:modulartop}.

 {\bf The paper is organized as follows.} In Section \ref{sec:background}  we give well known background
material on complex hyperbolic geometry and some information about the fundamental domain $D$ constructed in \cite{falbelparker}. In Section \ref{sec:localmodel}, we consider local topologies of isolated points of isotropy groups in $\rm PU(2,1; \mathbb{Z}[\omega])$.
In Section \ref{sec:EPmodularhandle}  we give the handle structure of the  Eisenstein-Picard modular surface based on the cells of the new fundamental domain $D^{*}$, so  we prove Theorem \ref{thm:modulartop}.

	\textbf{Acknowledgement}: Part of the work was carried out when Jiming Ma was  visiting  Hunan University in the summer of  2023, the hospitality  is gratefully appreciated. The second named author is grateful to LMNS(Laboratory
	of Mathematics for Nonlinear Science) of Fudan University for its hospitality and financial support.
	
\section{Preliminary}  \label{sec:background}

The purpose of this section is to introduce briefly complex hyperbolic geometry. One can refer to Goldman's book \cite{Go} for more details.  We also  sketch  some properties of   $\rm PU(2,1; \mathbb{Z}[\omega])$  after Falbel-Parker \cite{falbelparker}.

\subsection{Complex hyperbolic plane}

Let $\langle {\bf{z}}, {\bf{w}} \rangle={\bf{w}^{*}}H{\bf{z}}$ be the Hermitian form on ${\mathbb{C}}^3$ associated to $H$, where $H$ is the Hermitian matrix
$$
H=\left[
\begin{array}{ccc}
0 & 0 & 1 \\
0 & 1 & 0 \\
1 & 0 & 0 \\
\end{array}
\right]
$$
of signature $(2,1)$.
Then ${\mathbb{C}}^3$ is the union of the negative cone $V_{-}$, null cone $V_{0}$ and positive cone $V_{+}$, where
\begin{eqnarray*}
	% \nonumber to remove numbering (before each equation)
	V_{-} &=& \left\{ {\bf{z}}\in {\mathbb{C}}^3\backslash\{0\} : \langle {\bf{z}}, {\bf{z}} \rangle <0 \right\}, \\
	V_{0} &=& \left\{ {\bf{z}}\in {\mathbb{C}}^3\backslash\{0\} : \langle {\bf{z}}, {\bf{z}} \rangle =0 \right\}, \\
	V_{+} &=& \left\{ {\bf{z}}\in {\mathbb{C}}^3\backslash\{0\} : \langle {\bf{z}}, {\bf{z}} \rangle >0 \right\}.
\end{eqnarray*}

\begin{defn}
	Let $P: {\mathbb{C}}^3\backslash\{0\} \rightarrow {\mathbb{C}P^2}$ be the projectivization map.
	Then the \emph{complex hyperbolic plane} $\hc$ is defined to be $P(V_{-})$,  and its {boundary} $\partial \hc$ is defined to be $P(V_{0})$.
	The \emph{Bergman metric} on the complex hyperbolic plane is given by the distance formula
	\begin{equation}  \label{eq:bergman-metric}
	\cosh^2\left(\frac{d(u,v)}{2}\right)=\frac{\langle {\bf{u}}, {\bf{v}} \rangle\langle {\bf{v}}, {\bf{u}} \rangle}{\langle {\bf{u}}, {\bf{u}} \rangle \langle {\bf{v}}, {\bf{v}} \rangle},
	\end{equation}
	where ${\bf{u}}, {\bf{v}} \in {\mathbb{C}}^3$ are lifts of $u,v$. Here $d(u,v)$ is the distance between two points $u,v \in \hc$.
\end{defn}

%$(z_1,z_2,1)^{\top}$

The standard lift  \begin{equation}\nonumber
\left[
\begin{array}{c}
z_1 \\
z_2 \\
1 \\
\end{array}
\right] 
\end{equation}
of $z=(z_1,z_2)\in \mathbb {C}^{2}$ is negative if and only if
$$z_1+|z_2|^2+\overline{z}_1=2{\rm Re}(z_1)+|z_2|^2<0.$$  Thus $\mathbb{P}(V_{-})$ is a paraboloid in ${\mathbb C}^{2}$, called the {\it Siegel domain}.
In these coordinates, the boundary $\mathbb{P}(V_{0})$ is given by
$$2{\rm Re}(z_1)+|z_2|^2=0.$$

Let $\mathcal{N}=\mathbb{C}\times \mathbb{R}$ be the \emph{Heisenberg group} with product
$$
[z,t]\cdot [\zeta,\nu]=[z+\zeta,t+\nu-2{\rm{Im}}(\bar{z}\zeta)].
$$
We write $q=[z,t]\in\mathcal{N}$ for $z \in \mathbb{C}$ and $t \in \mathbb{R}$. Then the boundary of the complex hyperbolic plane $\partial \hc$ can be identified with the union $\mathcal{N}\cup \{q_{\infty}\}$, where $q_{\infty}$ is the point at infinity.
The \emph{standard lift} of $q_{\infty}$ and $q=[z,t]\in\mathcal{N}$ in $\mathbb{C}^3$ are
\begin{equation}\label{eq:lift}
{\bf{q}_{\infty}}=\left[
\begin{array}{c}
1 \\
0 \\
0 \\
\end{array}
\right] \quad \text{and} \quad
{\bf{q}}=\left[
\begin{array}{c}
\frac{-|z|^2+it}{2} \\
z \\
1 \\
\end{array}
\right].
\end{equation}

%We write $q=[z,t]\in\mathcal{N}$ for $z \in \mathbb{C}$ and $t \in %\mathbb{R}$ or $q=[x,y,t]\in\mathcal{N}$ for $x,y,t\in \mathbb{R} $  %and $z=x+yi$.

Complex hyperbolic plane and its boundary $\hc \cup \partial \hc$ can be identified with ${\mathcal{N}}\times{\mathbb{R}_{\geq 0}}\cup q_{\infty}$.
Any point $q=(z,t,u)\in{\mathcal{N}}\times{\mathbb{R}_{\geq0}}$ has the standard lift
$$
{\bf{q}}=\left[
\begin{array}{c}
\frac{-|z|^2-u+it}{2} \\
z \\
1 \\
\end{array}
\right].
$$
Here $(z,t,u)$ is called the \emph{horospherical coordinates} of $\overline {\bf H}^2_{\mathbb{C}}=\hc \cup \partial \hc$. The natural projection $\mathcal{N}=\mathbb{C} \times \mathbb{R} \rightarrow \mathbb{C}$ is called the {\it vertical projection}.

For $v >0$, the \emph{horoball} $H_{>v}$ of height $v$ is the open subspace of ${\bf H}^2_{\mathbb{C}}$ in horospherical coordinates  $(z,t,u)$ with $u > v$. The boundary of $H_{>v}$, that is,   the subspace of ${\bf H}^2_{\mathbb{C}}$ in horospherical coordinates  $(z,t,v)$,  is the \emph{horosphere} of height $v$.

\subsection{The isometries }The complex hyperbolic plane is a K\"{a}hler manifold of constant holomorphic sectional curvature $-1$.
We denote by $\rm U(2,1)$ the Lie group of $\langle \cdot,\cdot\rangle$ preserving complex linear
transformations and by $\rm PU(2,1)$ the group modulo scalar matrices. The group of holomorphic
isometries of ${\bf H}^2_{\mathbb C}$ is exactly $\rm PU(2,1)$. It is sometimes convenient to work with
$\rm SU(2,1)$, which is a 3-fold cover of $\rm PU(2,1)$.

Elements of $\rm SU(2,1)$ fall into three types, according to the number and types of the fixed points of the corresponding
isometry. Namely, an isometry is {\it loxodromic} (resp. {\it parabolic}) if it has exactly two fixed points (resp. one fixed point)
on $\partial {\bf H}^2_{\mathbb C}$. It is called {\it elliptic}  when it has (at least) one fixed point inside ${\bf H}^2_{\mathbb C}$.
An elliptic $A\in \rm SU(2,1)$ is called {\it regular elliptic} whenever it has three distinct eigenvalues, and {\it special elliptic} if
it has a repeated eigenvalue.

Any unipotent element of $\rm SU(2,1)$ is conjugate in $\rm SU(2,1)$
to one fixing $q_{\infty}$ given by:
$$T_{[z,t]}=\left(\begin{matrix}
1 & -\overline{z}& \frac{-|z|^{2}+it}{2} \\ 0 & 1 & z \\ 0 & 0 & 1 \end{matrix}\right).$$
Note that applying  $T_{[z,t]}$ to $[w,s]$ amounts to doing the Heisenberg
left multiplication by $[z,t]$. For that reason $T_{[z,t]}$ is called a 
{\it Heisenberg translation}.  A Heisenberg translation by $[0,t]$ is called a {\it vertical translation} by $t$.

The full stabilizer of $q_{\infty}$ is generated by the above unipotent group, together with the isometries of the forms
\begin{equation}
\left(\begin{matrix}
1 & 0& 0 \\ 0 & e^{i\theta} & 0 \\ 0 & 0 & 1 \end{matrix}\right) \quad  and   \quad
\left(\begin{matrix}
\lambda & 0 & 0 \\ 0 & 1 & 0 \\ 0 & 0 & 1/\lambda \end{matrix}\right),
\end{equation}
where $\theta,\lambda\in \mathbb{R}$ and $\lambda \neq 0$.  The first acts on $\partial {\bf H}^2_{\mathbb C}\backslash\{q_{\infty}\}=\mathbb{C}\times \mathbb{R}$ as a rotation
with vertical axis:
$$(z,t)\mapsto (e^{i\theta}z,t),$$  whereas the second one acts as $$(z,t)\mapsto (\lambda z,\lambda^2 t).$$
Note that the parabolic isometries fixing $q_{\infty}$ are Cygan isometries, see \cite{Go}.

\subsection{Totally geodesic submanifolds and complex reflections}
There are two kinds of totally geodesic submanifolds of real dimension 2 in ${\bf H}^2_{\mathbb C}$: {\it complex lines} in ${\bf H}^2_{\mathbb C}$ are
complex geodesics (represented by ${\bf H}^1_{\mathbb C}\subset {\bf H}^2_{\mathbb C}$) and {\it Lagrangian planes} in ${\bf H}^2_{\mathbb C}$ are totally
real geodesic 2-planes (represented by ${\bf H}^2_{\mathbb R}\subset {\bf H}^2_{\mathbb C}$). Since the Riemannian sectional curvature of the  complex hyperbolic plane is nonconstant, there are no totally geodesic hypersurfaces.  A {\it polar vector} of a complex line $L$  is the unique vector (up to scalar multiplication) perpendicular to this complex line with
respect to the Hermitian form. A polar vector  belongs to  $V_{+}$ and each vector in $V_{+}$ corresponds to a complex line.

%  or a $\mathbb{C}$-circle.

%The ideal boundary of a  Lagrangian plane  on $\partial\hc$ is called a $\mathbb{R}$-circle.
%The ideal boundary of a complex line on $\partial\hc$ is called a $\mathbb{C}$-circle.
%Let $L$ be a complex line and $\partial L$ the associated $\mathbb{C}$-circle. A {\it polar vector} of $L$ (or $\partial L$) is the unique vector (up to scalar multiplication) perpendicular to this complex line with
%respect to the Hermitian form. A polar vector  belongs to  $V_{+}$ and each vector in $V_{+}$ corresponds to a complex line or a $\mathbb{C}$-circle.

There is a special class of elliptic elements in $\rm PU(2,1)$.
\begin{defn}
	The \emph{complex reflection} of order $k$ about the  complex line $C$ with polar vector ${\bf{n}}$ is given by the following formula:
	\begin{equation}\label{eq:involution}
	I_{C}({\bf{z}})=-{\bf{z}}+(1- {\rm e ^{\frac{2 \pi {\rm i}}{k}}})\frac{2\langle {\bf{z}}, {\bf{n}} \rangle}{\langle {\bf{n}}, {\bf{n}} \rangle} {\bf{n}}.
	\end{equation}
	Then  $I_{C}$ is a holomorphic isometry fixing the complex line $C$ pointwisely.
\end{defn}

\subsection{Isometric spheres,  spinal coordinates and Ford polyhedron}

%the formula

\begin{defn}
	The \emph{(extended) Cygan metric} on $\hc$ is given by 
	\begin{equation}\label{eq:cygan-metric-extend}
	d_{\textrm{Cyg}}(p,q)=\left| |z-w|^2+|u-v|-i(t-s+2{\rm{Im}}(z\bar{w})) \right|^{1/2},
	\end{equation}
	where $p=(z,t,u)$ and $q=(w,s,v)$ are in horospherical coordinates.
\end{defn}

Suppose that $g=(g_{ij})^3_{i,j=1}\in \rm PU(2,1)$ does not fix $q_{\infty}$. This implies that $g_{31}\neq 0$, see Lemma 4.1 of \cite{Par}.
\begin{defn}
	The \emph{isometric sphere} of $g$, denoted by $ I(g)$, is the set
	\begin{equation}\label{eq:isom-sphere}
	I(g)=\{ p \in {\hc \cup \partial\hc} : |\langle {\bf{p}}, {\bf{q}}_{\infty} \rangle | = |\langle {\bf{p}}, g^{-1}({\bf{q}}_{\infty}) \rangle| \}.
	\end{equation}
\end{defn}

The \emph{interior} of $I(g)$ is the set
\begin{equation}\label{eq:exterior}
\{ p \in {\hc \cup \partial\hc} : |\langle {\bf{p}}, {\bf{q}}_{\infty} \rangle | > |\langle {\bf{p}}, g^{-1}({\bf{q}}_{\infty}) \rangle| \}.
\end{equation}
The \emph{exterior } of $I(g)$ is the set
\begin{equation}\label{eq:interior}
\{ p \in {\hc \cup \partial\hc} : |\langle {\bf{p}}, {\bf{q}}_{\infty} \rangle | < |\langle {\bf{p}}, g^{-1}({\bf{q}}_{\infty}) \rangle| \}.
\end{equation}

The isometric sphere $I(g)$ is the Cygan sphere with center
$$g^{-1}({\bf{q}}_{\infty})=\left[\frac{\overline{g_{32}}}{\overline{g_{31}}},2{\rm{Im}}(\frac{\overline{g_{33}}}{\overline{g_{31}}})\right]$$
and radius $r_g=\sqrt{\frac{2}{|g_{31}|}}$.

%Since isometric spheres are Cygan spheres, we now recall some facts about Cygan spheres.
%Let $S_{[0,0]}(r)$ be the Cygan sphere with center $[0,0]$ and radius $r>0$. Then
%\begin{equation}\label{eq:cygan-sphere}
%S_{[0,0]}(r)=\left\{ (z,t,u)\in\hc \cup \partial\hc: (|z|^2+u)^2+t^2=r^4 \right\}.
%\end{equation}

\begin{defn}The \emph{Ford domain} $D_{\Gamma}$ for a discrete group $\Gamma < \mathbf{PU}(2,1)$ centered at $q_{\infty}$ is
	the intersection of the (closures of the) exteriors of all isometric spheres of elements in $\Gamma$ not fixing $q_{\infty}$. That is, $D_{\Gamma}$ is the set
	$$\{p\in {\bf H}^2_{\mathbb C} \cup \partial{\bf H}^2_{\mathbb C}: |\langle \mathbf{p},q_{\infty}\rangle|\leq|\langle \mathbf{p},G^{-1}(q_{\infty})\rangle|
	\ \forall G\in \Gamma \ \mbox{with} \ G(q_{\infty})\neq q_{\infty} \}.$$
\end{defn}

%The boundary  at infinity of the Ford domain is made out of  pieces of isometric spheres. 
%When $q_{\infty}$ is either in the domain  of discontinuity or is a parabolic fixed point, the Ford  domain  is preserved by $\Gamma_{\infty}$, the stabilizer of
%$q_{\infty}$ in $\Gamma$.

%It is usually very hard to determine $D_{\Gamma}$ because one should check infinitely many inequalities.
%Therefore a general method will be to guess the Ford polyhedron and check  it using the Poincar\'e polyhedron theorem.
%The basic idea is that the sides of $D_{\Gamma}$ should be paired by isometries, and the images of $D_{\Gamma}$
%under these so-called side-pairing maps should give a local tiling of ${\bf H}^2_{\mathbb C}$.  If they do (and if
%the quotient of $D_{\Gamma}$ by the identification given by the side-pairing maps is complete), then the Poincar\'{e} polyhedron
%theorem implies that the images of $D_{\Gamma}$ actually give a global tiling of ${\bf H}^2_{\mathbb C}$.

The Ford  domain  is preserved by $\Gamma_{\infty}$, the stabilizer of
$q_{\infty}$ in $\Gamma$.  In this case, $D_{\Gamma}$ is only a fundamental domain modulo the action of $\Gamma_{\infty}$.  In other words,  the intersection of the Ford domain with a fundamental domain for $\Gamma_{\infty}$ is a fundamental domain for
$\Gamma$. Facets of co-dimension one, two, three and four in $D_{\Gamma}$ will be called {\it sides}, {\it ridges}, {\it edges} and {\it vertices}, respectively.

Once there is a guessed  Ford domain for a group $\Gamma$, then one  can check  it is in fact the Ford domain using the Poincar\'e polyhedron theorem. Moreover, the side-pairings and  the Poincar\'{e} polyhedron
theorem   give an abstract presentation of $\Gamma$, see \cite{dpp:2016}.

%Isometric spheres are  examples of \emph{bisectors}, so they have a foliation by two different families of totally geodesic submanifolds. 
%Mostow \cite{Mostow:1980}  showed that a bisector is the preimage of the real
%spine under the orthogonal projection onto the complex spine. The fibres of this projection are complex lines called the {\it complex slices} of the bisector. Goldman \cite{Go} showed that a bisector is the union of all  Lagrangian planes containing the real spine. Such Lagrangian planes are called the {\it real slices} or {\it meridians} of the bisector. The "foliation" of bisectors by real planes is actually a singular foliation, since all leaves intersect along the real spine.

\subsection{A fundamental domain of  the   Eisenstein-Picard modular group $\rm PU(2,1; \mathbb{Z}[\omega])$} \label{subsection:FPdomain}

We outline the  fundamental domain of the   Eisenstein-Picard modular group  $\rm PU(2,1; \mathbb{Z}[\omega])$  by Falbel-Parker \cite{falbelparker}.

Now let $\Gamma=\rm PU(2,1; \mathbb{Z}[\omega])$, and $\Gamma_{\infty}$ the stabilizer of $q_{\infty}$ in $\Gamma$.

\begin{prop} \label{prop:gammainfty}  \cite{falbelparker} $\Gamma_{\infty}$ is generated by 
	\begin{equation}
	P=\left[\begin{matrix}	1 & 1& \omega \\ 0 & \omega & -\omega \\ 0 & 0 & 1 \end{matrix}\right] \quad  and  \quad  Q=
	\left[\begin{matrix}
	1 & 1 & \omega \\ 0 & -1 & 1 \\ 0 & 0 & 1 \end{matrix}\right].
	\end{equation}
	Moreover,  $\Gamma_{\infty}$  has the presentation 
	$$\Gamma_{\infty}=\left\langle P, Q \big| \begin{array}  {c}  (QP^{-1})^6=P^3Q^{-2}=id
	\end{array}\right\rangle.
	$$

\end{prop}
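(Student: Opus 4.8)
\textbf{Proof proposal for Proposition \ref{prop:gammainfty}.}

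The plan is to compute the stabilizer $\Gamma_\infty$ directly using the explicit description of elements of $\mathrm{PU}(2,1)$ fixing $q_\infty$ given earlier in this section, namely that the full stabilizer of $q_\infty$ in $\mathrm{PU}(2,1)$ is generated by the Heisenberg translations $T_{[z,t]}$ together with the rotation and dilation families. First I would intersect this with the arithmetic condition: an element of $\mathrm{PU}(2,1;\mathbb{Z}[\omega])$ fixing $q_\infty$ must be upper triangular (since $g_{31}=0$ forces, by the structure of $\mathrm{SU}(2,1)$, the whole bottom row to be $(0,0,g_{33})$) with entries in $\mathbb{Z}[\omega]$ up to the scalar ambiguity. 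Because the only units in $\mathbb{Z}[\omega]$ are the sixth roots of unity, the dilation part $\lambda$ is a unit, and after normalizing one sees the rotation angle $\theta$ must be a multiple of $\pi/3$; meanwhile the translation part $[z,t]$ must have $z\in\mathbb{Z}[\omega]$ and the twist $t$ constrained by the Heisenberg group law together with integrality. This identifies $\Gamma_\infty$ as a specific arithmetically-defined subgroup of the Heisenberg isometry group.

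Next I would verify that $P$ and $Q$ as written lie in $\Gamma_\infty$ (immediate: check they are in $\mathrm{SU}(2,1)$, have entries in $\mathbb{Z}[\omega]$, and fix $\mathbf{q}_\infty=(1,0,0)^T$), and then show they generate. Here I would compute the Heisenberg data of $P$ and $Q$: each is a Heisenberg translation possibly composed with a rotation, so I would read off the images of $[z,t]$ and identify which lattice of translations and which rotations $\langle P,Q\rangle$ produces. Concretely, $QP^{-1}$ should come out to be an order-$6$ rotation (a pure rotation by $\pi/3$ about the vertical axis, up to a translation conjugacy), and $P^3Q^{-2}$ should come out to be the identity or a central element that is trivial in $\mathrm{PU}(2,1)$ — these two computations are the heart of establishing the two stated relations. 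Then a counting/normal-form argument shows that the subgroup generated by the translation sublattice together with the $\pi/3$-rotation exhausts all of $\Gamma_\infty$: every element can be written as (rotation)$\circ$(translation) with the translation in $\mathbb{Z}[\omega]\times(\text{appropriate }\mathbb{R}\text{-constraint})$, and one checks these are all realized.

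For the presentation itself, I would invoke the standard structure theory: $\Gamma_\infty$ is a lattice in the Heisenberg isometry group, hence virtually nilpotent, and in fact it is an extension of a cyclic rotation group $\mathbb{Z}/6$ (generated by the image of $QP^{-1}$) by the $2$-step nilpotent translation lattice. The relations $(QP^{-1})^6=id$ and $P^3Q^{-2}=id$ should be shown to suffice by checking that the group they present has the right order structure — e.g. by exhibiting a surjection from the abstractly presented group onto $\Gamma_\infty$ and then showing it is injective via a normal form (every word reduces to $(QP^{-1})^k$ times a translation word, and the translation subgroup is free abelian of the expected rank once the twist is accounted for). The main obstacle I anticipate is bookkeeping the vertical (twist) coordinate correctly: the Heisenberg group law introduces the $2\,\mathrm{Im}(\bar z\zeta)$ cocycle term, so verifying that $P^3Q^{-2}$ is \emph{exactly} the identity in $\mathrm{PU}(2,1)$ — rather than a nontrivial vertical translation — requires care with that cocycle, and likewise confirming no further relations are needed amounts to pinning down precisely which vertical translations lie in the commutator subgroup $[\Gamma_\infty,\Gamma_\infty]$. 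Since Falbel-Parker have carried this out, I would cite \cite{falbelparker} for the verification and reproduce only the explicit matrix computations of $QP^{-1}$ and $P^3Q^{-2}$.
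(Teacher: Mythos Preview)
The paper does not supply its own proof of this proposition: it is stated with the citation \cite{falbelparker} and treated as background input from Falbel--Parker. Your proposal, which sketches the direct computation of the stabilizer (upper-triangular matrices with $\mathbb{Z}[\omega]$ entries, units forcing the rotation to have order dividing~$6$, Heisenberg translation lattice) and then defers the full verification to \cite{falbelparker}, is therefore entirely in line with how the paper handles it---indeed you end up citing the same source for the same reason.
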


We also need a complex reflection of order two, say  
	\begin{equation}
R=\left[\begin{matrix}	0 & 0& 1\\ 0 & -1 & 0 \\ 1 & 0 & 0 \end{matrix}\right]
\end{equation}
in $\Gamma$.
We define $$J=RP$$ as an order-3 regular elliptic element. We also define  $$R_1=QP^{-1},\quad ~~~~R_2=JR_1J^{-1}=RPQP^{-2}R,\quad  ~~ R_3=J^{-1}R_1J=P^{-1}Q$$ as three $\mathbb{C}$-reflections of order-6 (note that there is a typo on page 285 of \cite{falbelparker}, where it stated $R_2=RPQ^{-1}P^{-2}R$). We  also note that $R=(JR^{-1}_1J)^2=(R_3R_1R_2)^2$.

For readers' convenience, we 
also list the polar vector of the mirror of $R_i$ for $i=1,2,3$. They are 
\begin{equation} \nonumber
n_1=\left[\begin{matrix}		0 \\ 1  \\ 0 \end{matrix}\right], \quad  \quad  n_2=\left[\begin{matrix}	0\\-\omega \\ 1 \end{matrix}\right] \quad \text {and} \quad   n_3=\left[\begin{matrix}	-1\\1\\ 0 \end{matrix}\right]
\end{equation}
respectively. Then the   polar vectors of the mirrors of $R_3R_1R^{-1}_3$,  $R_1R_2R^{-1}_1$ and $R_2R_3R^{-1}_2$ are 
\begin{equation} \nonumber
 \left[\begin{matrix}	\overline{\omega}\\1 \\ 0 \end{matrix}\right], \quad   \quad  \left[\begin{matrix}		0 \\ 1  \\ 1 \end{matrix}\right] \quad \text {and}  \quad \left[\begin{matrix}	-\omega\\0 \\ 1 \end{matrix}\right]
\end{equation}
respectively. 

Let
\begin{equation} \nonumber
z_0=\left[\begin{matrix}		\overline{\omega} \\ 0  \\ 1 \end{matrix}\right], \quad  \quad  z_1=\left[\begin{matrix}	-1\\-\omega  \\ 1 \end{matrix}\right], \quad  \quad  z_2=\left[\begin{matrix}	-1\\1\\ 1 \end{matrix}\right]  \quad  and  \quad  z_3=\left[\begin{matrix}	\omega \\0 \\ 1 \end{matrix}\right]
\end{equation}
be four points in $\mathbf{H}^2_{\mathbb C}$. 
 Then checking directly, we have 
 \begin{prop} \label{prop:reflection} The reflections have the following properties:
\begin{itemize}
	
	\item $R_1$ fixes $z_0$, $z_3$ and $q_{\infty}$. Moreover, $R_1(z_1)=z_2$;
	\item $R_2$ fixes $z_1$;
	\item $R_3$ fixes  $z_2$ and $q_{\infty}$;
		\item 
	 $R_2R_3R_2^{-1}$  fixes $z_0$;
	 	\item 
	 $R_3R_1R_3^{-1}$  fixes  $z_1$ and $q_{\infty}$;
	 
	 	\item 
	 $R_1R_2R_1^{-1}$  fixes  $z_2$;
	
	\item 
 $R_2$ and $R_3R_1R_3^{-1}$ commute;
 
 	\item 
 $R(z_0)=z_3$,  $R(z_3)=z_0$, $R(z_1)=z_1$ and $R(z_2)=z_2$.
\end{itemize}
\end{prop}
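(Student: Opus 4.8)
The plan is to treat every bullet as a direct computation in $\mathrm{GL}(3,\mathbb C)$, organised around one observation: a complex reflection about a complex line $C$ with polar vector $\mathbf n$ fixes $C$ pointwise, so a point of $\hc$ with lift $\mathbf w$ is fixed by that reflection exactly when $\mathbf w$ lies on the mirror, i.e.\ when $\langle\mathbf w,\mathbf n\rangle=0$. All of the reflections in the statement --- $R_1$, $R_2$, $R_3$, the conjugates $R_2R_3R_2^{-1}$, $R_3R_1R_3^{-1}$, $R_1R_2R_1^{-1}$, and $R$ --- are complex reflections whose mirrors' polar vectors have already been recorded above. So the first step is to run through all the ``fixes'' assertions and check the vanishing of the corresponding Hermitian pairings: $\langle z_0,n_1\rangle$, $\langle z_3,n_1\rangle$, $\langle q_\infty,n_1\rangle$ for $R_1$; $\langle z_1,n_2\rangle$ for $R_2$; $\langle z_2,n_3\rangle$, $\langle q_\infty,n_3\rangle$ for $R_3$; and the pairings of $z_0$, $z_1$, $z_2$ with the listed polar vectors of $R_2R_3R_2^{-1}$, $R_3R_1R_3^{-1}$, $R_1R_2R_1^{-1}$ respectively. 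Each is a one- or two-line calculation from $\langle\mathbf z,\mathbf w\rangle=\mathbf w^{*}H\mathbf z$, the explicit form of $H$, and the identities $\omega^{3}=1$, $\bar\omega=\omega^{2}=-1-\omega$, $\omega\bar\omega=1$; they all vanish.

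Next I would handle the ``maps to'' assertions, which do require explicit matrices, but only two of them. For the four $R$-claims ($R(z_0)=z_3$, $R(z_3)=z_0$, $R(z_1)=z_1$, $R(z_2)=z_2$) one simply applies the given matrix $R$ to the standard lifts of $z_0,z_1,z_2,z_3$ and recognises the output as a scalar multiple of the target lift, using $1/\omega=\omega^{2}=\bar\omega$ to spot the proportionality. For $R_1(z_1)=z_2$ I would first simplify $R_1=QP^{-1}$: carrying out the product (and using $1+\omega+\bar\omega=0$) shows $R_1=\operatorname{diag}(1,-\bar\omega,1)$, after which $R_1(z_1)=z_2$ is immediate and, as a bonus, the fixed points $z_0$, $z_3$, $q_\infty$ of $R_1$ checked in the previous step become transparent. (Alternatively one could feed the polar vectors into the reflection formula~\eqref{eq:involution}, but the direct matrix product is cleaner here.)

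For the commutation statement that $R_2$ and $R_3R_1R_3^{-1}$ commute, I would use the standard fact that two complex reflections in $\mathrm{PU}(2,1)$ commute when their mirrors coincide or meet orthogonally --- equivalently, when their polar vectors are $H$-proportional or $H$-orthogonal. Here the polar vector $n_2$ of $R_2$ and the listed polar vector of $R_3R_1R_3^{-1}$ pair to $0$ under $\langle\cdot,\cdot\rangle$, so the mirrors are orthogonal and the two reflections commute. One could instead just multiply the two $3\times 3$ matrices, but the orthogonality check is shorter.

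There is no genuine obstacle: this proposition is a bookkeeping exercise. The only points that need care are fixing the convention $\langle\mathbf z,\mathbf w\rangle=\mathbf w^{*}H\mathbf z$ once and for all (so that the conjugate-linear slot never gets swapped), remembering that all the identities are projective so that scalar multiples of lifts must be allowed, and --- whenever one cross-checks a conjugate reflection through the element $J=RP$ --- using the corrected definition $R_2=RPQP^{-2}R$ rather than the typo flagged just above.
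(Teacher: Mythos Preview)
Your proposal is correct and is precisely the approach the paper takes: the paper's entire proof is the phrase ``checking directly,'' and you have simply made explicit what that direct check consists of (Hermitian pairings with the listed polar vectors for the fixed-point claims, explicit matrix application for the $R$- and $R_1$-images, and orthogonality of polar vectors for the commutation). There is nothing to add.
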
 

 Falbel-Parker  \cite{falbelparker}
constructed a fundamental domain  $D$ of $\Gamma$ on $\mathbf{H}^2_{\mathbb C}$, which is a 4-simplex with vertices $q_{\infty}$ and $z_i$ for $i=0,1,2,3$. So in other words, $D$ can be viewed as a geometrical realization of a 4-simplex in  $\overline{\mathbf{H}^2_{\mathbb C}}$. We will denote by  $[z_0,q_{\infty}]$ the edge in $D$ spanned by $z_0$ and $q_{\infty}$, and by  $[z_0, z_3, z_{1}]$  the ridge in $D$ spanned by  $z_0$, $z_3$ and  $z_{1}$, and  etc. 
We will not need the detailed geometric information of $D$ except for the following:

\begin{prop} \label{prop:geoofD}  \cite{falbelparker} For  the geometric polytope   $D$  in $\mathbf{H}^2_{\mathbb C}$:
	\begin{itemize}

	\item    $z_i$ for $i=0,1,2,3$ span a tetrahedron $T_{0}$ in the isometric sphere of $RP$  in $\mathbf{H}^2_{\mathbb C}$; $D$ is the geodesic cone  from $q_{\infty}$ to $T_{0}$;
		\item Each edge  of $D$ is a geodesic;
		 
		\item The ridge $[z_0,z_3, q_{\infty}]$ lies in a  $\mathbb{C}$-line. 
			\item  The ridges $[z_0,z_3, z_{1}]$ and   $[z_0,z_3, z_{2}]$ lie  in   $\mathbb{R}$-planes. Moreover, $R_1([z_0,z_3, z_{1}])=[z_0,z_3, z_{2}]$.
		
	   \end{itemize}
\end{prop}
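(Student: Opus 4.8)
The plan is to verify each of the four bullets by explicit computation with the lifts given in \eqref{eq:lift} and the Hermitian form $H$, using the characterizations of complex lines ($\mathbb C$-planes) and Lagrangian planes ($\mathbb R$-planes) in $\mathbf H^2_{\mathbb C}$. Throughout I would work with the standard lifts $\mathbf z_0,\mathbf z_1,\mathbf z_2,\mathbf z_3$ of the four points together with $\mathbf q_\infty=(1,0,0)^{\mathsf T}$, and with the matrices $RP$ and $R_1=QP^{-1}$. For the first bullet, recall that the isometric sphere $I(RP)$ is the Cygan sphere with center $(RP)^{-1}(q_\infty)$ and radius $\sqrt{2/|(RP)_{31}|}$; I would compute $(RP)_{31}$, and then check that $\mathbf z_0,\mathbf z_1,\mathbf z_2,\mathbf z_3$ all satisfy the defining equation of $I(RP)$, i.e. $|\langle \mathbf z_i,\mathbf q_\infty\rangle|=|\langle \mathbf z_i,(RP)^{-1}(\mathbf q_\infty)\rangle|$; four points in $\overline{\mathbf H^2_{\mathbb C}}$ in general position span a (combinatorial) $3$-simplex $T_0$ inside that sphere, and since $D$ is by construction the geodesic cone from $q_\infty$ over $T_0$ in the Falbel--Parker description, this bullet is essentially a recollection from \cite{falbelparker} plus the arithmetic check that the $z_i$ really lie on $I(RP)$. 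The second bullet, that each edge of $D$ is a geodesic, follows because an edge $[z_i,z_j]$ is the intersection of $\overline{\mathbf H^2_{\mathbb C}}$ with the projectivization of the complex span of two lifts, which is always a (complex or real) geodesic segment; for edges $[z_i,q_\infty]$ the same argument applies. This is standard and I would state it with a one-line justification.

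For the third bullet, I would show the ridge $[z_0,z_3,q_\infty]$ lies in a $\mathbb C$-line by exhibiting a common polar vector: a ridge lies in a complex line iff the $\mathbb C$-span of the three lifts is two-dimensional, equivalently iff there is a single vector $\mathbf n\in V_+$ with $\langle \mathbf z_0,\mathbf n\rangle=\langle \mathbf z_3,\mathbf n\rangle=\langle \mathbf q_\infty,\mathbf n\rangle=0$. Given $\mathbf z_0=(\overline\omega,0,1)^{\mathsf T}$, $\mathbf z_3=(\omega,0,1)^{\mathsf T}$, $\mathbf q_\infty=(1,0,0)^{\mathsf T}$, the vector $\mathbf n=(0,1,0)^{\mathsf T}$ — which is exactly the polar vector $n_1$ of the mirror of $R_1$ — is orthogonal to all three, and $\langle n_1,n_1\rangle=1>0$, so it is a genuine polar vector; this simultaneously identifies the $\mathbb C$-line as the mirror of $R_1$, consistent with Proposition \ref{prop:reflection}. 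For the fourth bullet, a ridge lies in a Lagrangian ($\mathbb R$-)plane iff the three lifts can be rescaled so that all pairwise Hermitian products are real (equivalently the $3\times 3$ Gram matrix is real symmetric after rescaling); I would compute the Gram matrices of $\{\mathbf z_0,\mathbf z_3,\mathbf z_1\}$ and of $\{\mathbf z_0,\mathbf z_3,\mathbf z_2\}$, adjust each lift by a unit-modulus scalar to kill the phases, and check the result is real. Finally, $R_1([z_0,z_3,z_1])=[z_0,z_3,z_2]$ is immediate from Proposition \ref{prop:reflection}: $R_1$ fixes $z_0$ and $z_3$ and sends $z_1$ to $z_2$, and $R_1$ is an isometry, so it carries the ridge spanned by $\{z_0,z_3,z_1\}$ onto the one spanned by $\{z_0,z_3,z_2\}$.

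The main obstacle is the fourth bullet: verifying that $[z_0,z_3,z_1]$ and $[z_0,z_3,z_2]$ lie in $\mathbb R$-planes requires finding the correct unit-modulus rescalings of the three lifts that simultaneously realize all three Hermitian pairings as real numbers, and it is conceivable that the lifts as written in \eqref{eq:lift} do not already have this property — one must check the phase obstruction (the product of the three off-diagonal Gram entries, which is a scaling-invariant) actually vanishes, i.e. lies in $\mathbb R_{>0}$ after accounting for signs. If that invariant turned out not to be real I would instead use the alternative criterion that a Lagrangian plane through $z_0,z_3$ is the fixed set of an antiholomorphic involution, and exhibit such an involution (a real structure) fixing all three points; but I expect the direct Gram-matrix computation to go through, since Falbel--Parker already assert this ridge structure and the points $z_i$ have entries in $\mathbb Z[\omega]$, for which the relevant phases are controlled cube roots of unity.
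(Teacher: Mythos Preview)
The paper does not prove this proposition at all: it is stated with the citation \cite{falbelparker} and no argument is supplied, since the authors only need to quote these facts about Falbel--Parker's fundamental domain. Your proposal is therefore not so much a different route as an attempt to reconstruct a verification that the paper deliberately omits. For bullets one, three, and four your outline is sound: the check that each $z_i$ lies on $I(RP)$ is routine arithmetic; the polar-vector argument with $n_1=(0,1,0)^{\mathsf T}$ is correct and cleanly identifies the $\mathbb C$-line as the mirror of $R_1$; and for the Lagrangian claim the triple-product invariant $\langle z_0,z_1\rangle\langle z_1,z_3\rangle\langle z_3,z_0\rangle$ does come out real (equal to $-6$), so the rescaling you anticipate exists, and the $R_1$-symmetry then handles $[z_0,z_3,z_2]$.

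There is, however, a genuine slip in your treatment of the second bullet. You write that an edge $[z_i,z_j]$ ``is the intersection of $\overline{\mathbf H^2_{\mathbb C}}$ with the projectivization of the complex span of two lifts, which is always a (complex or real) geodesic segment.'' That intersection is a complex line --- a totally geodesic disc of real dimension two --- not a one-dimensional geodesic arc. The edges of $D$ are not abstractly forced to be geodesics merely because $D$ is combinatorially a $4$-simplex; in Falbel--Parker's construction the finite edges of $T_0$ lie in a bisector, and bisectors are not totally geodesic. The reason the edges are geodesic is specific to the construction: the infinite edges $[z_i,q_\infty]$ are geodesic because $D$ is the \emph{geodesic} cone over $T_0$, and the finite edges $[z_i,z_j]$ are geodesic because each of the four faces of $T_0$ lies in a totally geodesic $\mathbb C$-line or $\mathbb R$-plane (as in bullets three and four, together with their $P$- and $J$-images), and hence so do its boundary arcs. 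If you want an honest proof of bullet two, you should derive it from bullets three and four rather than from the incorrect span argument.
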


Using the fundamental domain $D$, Falbel-Parker proved
\begin{thm} \label{thm:grouppresentation}  \cite{falbelparker}    $\Gamma$  has the following equivalent presentations:
	\begin{enumerate}
		
		\item    $$\left\langle P, Q, R \big| \begin{array}  {c} R^2= (QP^{-1})^6= PQ^{-1}RQP^{-1}R=P^3Q^{-2}=(RP)^3=id
		\end{array}\right\rangle;
		$$
		\item  	$$\left\langle R_1, R_2, R_3 \Bigg| \begin{array}  {c} R_{k}^6=id, R_{k+1}R_{k}R_{k+1}=R_{k}R_{k+1} R_{k}, ~~k \in \{1,2,3\}\\ [3 pt]
		(R_1R_2R_3)^{4}=id, (R_1R_2R_3)^{-2} R_1R_2 =(R_2R_3R_1)^{-2} R_2R_3
		\end{array}\right\rangle;
		$$
		\item 	
		$$\left\langle J, R_1 \big| \begin{array}  {c} J^3=R_1^6= (JR^{-1}_1J)^4= R_1(JR^{-1}_1J)^2R^{-1}_1(JR^{-1}_1J)^{-2}=id
		\end{array}\right\rangle.
		$$
		
	\end{enumerate}
\end{thm}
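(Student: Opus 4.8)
The plan is to apply the Poincar\'e polyhedron theorem to the $4$-simplex $D$ of Proposition~\ref{prop:geoofD}, and then to pass between the three presentations by Tietze transformations. First I would enumerate the codimension-one faces (sides) of $D$. Since $D$ is the geodesic cone from $q_{\infty}$ over the tetrahedron $T_0 \subset I(RP)$ spanned by $z_0,z_1,z_2,z_3$, its sides are $T_0$ itself together with the four geodesic cones from $q_{\infty}$ over the triangular faces of $T_0$. I would then identify the side-pairing maps: $RP$ carries $T_0$ onto the face of an adjacent copy of $D$, the generators $P$ and $Q$ of $\Gamma_{\infty}$ pair the ``vertical'' sides through $q_{\infty}$ among themselves, and the complex reflection $R_1 = QP^{-1}$ pairs the two ridges $[z_0,z_3,z_1]$ and $[z_0,z_3,z_2]$ in accordance with Proposition~\ref{prop:geoofD}. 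Proposition~\ref{prop:reflection} supplies exactly the incidence data needed to match faces: e.g.\ $R_1$ fixes $z_0,z_3,q_{\infty}$ and sends $z_1 \mapsto z_2$, while $R$ swaps $z_0 \leftrightarrow z_3$ and fixes $z_1,z_2$.

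Second, with the side-pairings in hand I would run through the ridge cycles: for each codimension-two face, compose the successive side-pairings around it, verify that the resulting cycle transformation is elliptic of the appropriate finite order (or the identity), and read off the corresponding cycle relation. The relations internal to the cusp stabiliser are already furnished by Proposition~\ref{prop:gammainfty}, namely $(QP^{-1})^6 = P^3Q^{-2} = \mathrm{id}$; the remaining ridge cycles involve $R$ and $RP$ and should yield $R^2 = (RP)^3 = PQ^{-1}RQP^{-1}R = \mathrm{id}$. Collecting the generators $\{P,Q,R\}$ with all cycle relations, and invoking the Poincar\'e polyhedron theorem, yields presentation~(1) and simultaneously reconfirms that $D$ is a fundamental domain for $\Gamma$ modulo $\Gamma_{\infty}$.

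Third, I would obtain presentations~(2) and~(3) purely algebraically. Put $J = RP$ (so $J^3 = \mathrm{id}$ from $(RP)^3 = \mathrm{id}$), $R_1 = QP^{-1}$, $R_2 = JR_1J^{-1}$, $R_3 = J^{-1}R_1J$, and use the identity $R = (R_3R_1R_2)^2$ recorded before the theorem. Since $R^2 = \mathrm{id}$ gives $P = RJ$ and hence $Q = R_1RJ$, the three sets $\{P,Q,R\}$, $\{R_1,R_2,R_3\}$ and $\{J,R_1\}$ generate the same group; Tietze moves then translate the relation list of~(1) into the braid-type relations $R_k^6 = \mathrm{id}$, $R_{k+1}R_kR_{k+1} = R_kR_{k+1}R_k$, $(R_1R_2R_3)^4 = \mathrm{id}$ and $(R_1R_2R_3)^{-2}R_1R_2 = (R_2R_3R_1)^{-2}R_2R_3$ of~(2), and from there into~(3) by eliminating $R_2$ and $R_3$. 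The conjugation relations defining $R_2,R_3$ together with the commutation facts of Proposition~\ref{prop:reflection} are what make this rewriting close up.

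I expect the main obstacle to be the verification step inside the Poincar\'e polyhedron theorem: one must check that around every ridge the finitely many $\Gamma$-translates of $D$ tile a neighbourhood with total dihedral angle exactly $2\pi/m$ for the order $m$ of the cycle transformation and with trivial holonomy, and that the quotient of $\overline{\hc}$ is complete (the cusp at $q_{\infty}$ being accounted for by $\Gamma_{\infty}$). This requires explicit, if routine, computation with the Cygan isometric spheres $I(g)$ in horospherical coordinates, and careful bookkeeping of which side-pairing follows which around each ridge---particularly the ridges incident to several of the reflection mirrors, where the large isotropy near $z_0$ emerges. By contrast, the equivalence of the three presentations is mechanical once the dictionary $J = RP$, $R_1 = QP^{-1}$, $R = (R_3R_1R_2)^2$ is fixed.
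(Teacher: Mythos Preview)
The paper does not itself prove this theorem: it is quoted verbatim from Falbel--Parker \cite{falbelparker}, as signalled by the citation in the theorem heading and the sentence ``Using the fundamental domain $D$, Falbel--Parker proved'' immediately preceding it. There is therefore no proof in the paper to compare against.

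That said, your outline is a faithful reconstruction of the Falbel--Parker argument: apply the Poincar\'e polyhedron theorem to the $4$-simplex $D$ with the side-pairings by $P$, $Q^{-1}$ (equivalently $R_1$), and $RP$, read off presentation~(1) from the ridge cycles together with the $\Gamma_{\infty}$-relations of Proposition~\ref{prop:gammainfty}, and then obtain (2) and (3) by Tietze transformations using the dictionary $J = RP$, $R_1 = QP^{-1}$, $R_2 = JR_1J^{-1}$, $R_3 = J^{-1}R_1J$, $R = (R_3R_1R_2)^2$. Your identification of the main difficulty---the local tessellation and completeness checks in the Poincar\'e theorem, carried out in Cygan/horospherical coordinates---is also accurate; that is where the bulk of the work in \cite{falbelparker} lies. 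One minor correction: $R_1$ is a side-pairing (it pairs the two $\mathbb{R}$-plane sides $[q_\infty,z_0,z_3,z_1]$ and $[q_\infty,z_0,z_3,z_2]$), not merely a ridge identification, and the side $T_0$ is paired by $RP$ (or $R$) rather than by $RP$ alone carrying it to ``an adjacent copy''---but this does not affect the structure of your plan.
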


\section{Local topologies of isolated fixed points of isotropy groups} \label{sec:localmodel}
The   conjugacy classes of torsion elements in some
Picard modular groups were well understood in \cite{deraux-Xu}. In this section, we consider the topologies around the  isolated fixed points of isotropy groups.

Recall if a  group $G$  acts on a  space $X$, and   $x \in X$ be a point, then $$G_{x}= \{ g \in G: gx=x\}$$ is  called the \emph{isotropy group} associated to $x$. 
For a point $z \in {\bf H}^2_{\mathbb C}$, we denote by $[z]$ its projection to $M={\bf H}^2_{\mathbb C}/\Gamma$. We will consider the topology  of a closed small neighborhood of $[z]$ in $M={\bf H}^2_{\mathbb C}/\Gamma$
when $\Gamma_{z}$ is a nontrivial subgroup.

\subsection{Local topology around $[w_{3}]$} \label{subsection:w3}
% $w_{3}=[-{\rm e}^{\frac{\pi \rm i}{9}},
%\frac{1}{2}-(\frac{\sqrt{3}}{2}-2 \sin(\frac{\pi}{9})) \cdot  {\rm i},1]^{\top}$.

Recall $J=RP$ in Subsection \ref{subsection:FPdomain}. We denote by $w_{3}$ the fixed point of $J$. It can be checked directly that  \begin{equation} \nonumber
 w_3=\left[\begin{matrix}	-{\rm e}^{\frac{\pi \rm i}{9}} \\  \frac{1}{2}-(\frac{\sqrt{3}}{2}-2 \sin(\frac{\pi}{9})) \cdot  {\rm i}  \\ 1 \end{matrix}\right].
 \end{equation}

 Moreover we have  $J(z_0)=z_1$, $J(z_1)=z_2$, $J(z_2)=z_0$.  
  Using the notations in  \cite{falbelparker}, where the 2-face of $D$  with vertices $z_0, z_1$ and $z_2$  is denoted by  $F_3$. 
   With the geographical coordinates of $F_3$ on page 270 of \cite{falbelparker}, it can be showed $w_{3}$ lies  in  the interior  of the  2-face $F_3$.  See Figure 	\ref{figure:fundamentaldomain}. 
 Let $\Gamma_{w_{3}}$ be the stabilizer of $w_{3}$ in $\Gamma$. From  \cite{falbelparker} and \cite{deraux-Xu}, $\Gamma_{w_{3}}$ is generated by $J$  with order 3. We take a small closed  neighborhood  $N([w_{3}])$ of  $[w_{3}]$   in $M$.  Then $N([w_{3}])$  is a  4-orbifold with non-empty boundary. The boundary of  $N([w_{3}])$ is also called the \emph{vertex link} of $N([w_{3}])$ in $M$, which  is  denoted by $Y_{3}$.  $Y_{3}$ is a spherical 3-orbifold, and $N([w_{3}])$  is a cone on $Y_3$. So $Y_3$ completely determines the topology of $N([w_{3}])$.

\begin{prop} \label{prop:w12} $Y_{3}$ is the lens space $L(3,-1)$.
\end{prop}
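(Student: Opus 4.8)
The plan is to identify the local action of the isotropy group $\Gamma_{w_3} = \langle J \rangle \cong \mathbb Z_3$ on a small ball around $w_3$ in $\mathbf H^2_{\mathbb C}$, and then recognize the quotient of the boundary sphere as a lens space. Since $J$ is a regular elliptic element of order $3$ fixing $w_3$, its differential at $w_3$ acts on the tangent space $T_{w_3}\mathbf H^2_{\mathbb C} \cong \mathbb C^2$ as a unitary transformation of order $3$; after diagonalizing we may write $dJ_{w_3} = \mathrm{diag}(\zeta^a, \zeta^b)$ with $\zeta = e^{2\pi i/3}$ and $a, b \in \{1, 2\}$ (neither can be $0$ since $w_3$ is an isolated fixed point, i.e.\ the action on the link is free). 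First I would compute the eigenvalues of $J = RP$ explicitly — these come directly from the characteristic polynomial of the $3\times 3$ matrix $RP$, rescaled to lie in $\mathrm{SU}(2,1)$ — and extract from them the two "rotation angles" at $w_3$, i.e.\ the pair $(a,b)$. The three eigenvalues of a representative in $\mathrm{SU}(2,1)$ are of the form $\{\mu, \mu\zeta^a, \mu\zeta^b\}$ with the negative-type eigenvector corresponding to $\mu$ (the fixed direction), and the ratios give $(a,b)$.

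Next I would invoke the standard fact that a small closed ball around $w_3$ maps to the cone on $S^3/\langle dJ_{w_3}\rangle$, and that $S^3$ modulo the free linear $\mathbb Z_3$-action $\mathrm{diag}(\zeta^a,\zeta^b)$ is by definition the lens space $L(3, a b^{-1})$ (with the orientation convention fixed by the paper's chart). So $Y_3 = L(3, q)$ where $q \equiv a b^{-1} \pmod 3$. Since the only nontrivial possibilities up to the usual lens-space equivalences $L(3,q) \cong L(3, q') $ iff $q' \equiv \pm q^{\pm 1}$ are $L(3,1)$ and $L(3,-1)$ (which are genuinely distinct — $L(3,1)$ bounds a disk bundle with even intersection form while $L(3,-1) = L(3,2)$ is its orientation-reverse), the content of the proposition is that the eigenvalue data forces $(a,b)$ with $ab^{-1} \equiv -1 \equiv 2 \pmod 3$, i.e.\ $dJ_{w_3} = \mathrm{diag}(\zeta, \zeta^{-1})$ (a rotation by $2\pi/3$ in one complex direction and $-2\pi/3$ in the other). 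I would verify this directly from the computed eigenvalues of $RP$; given the explicit coordinates of $w_3$ already recorded, checking $\langle \mathbf w_3, \mathbf w_3\rangle < 0$ and reading off the eigenvalue triple is a short linear-algebra computation.

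The main obstacle I anticipate is purely bookkeeping: getting the orientation convention right so that the answer is $L(3,-1)$ rather than the (abstractly homeomorphic-up-to-orientation, but differently labeled) $L(3,1)$. This requires being careful that the identification $T_{w_3}\mathbf H^2_{\mathbb C} \cong \mathbb C^2$ is complex-linear and orientation-compatible with the ambient complex structure, and that the lens-space parameter is read in the same convention used elsewhere in the paper (e.g.\ in the analogous computations for $w_4$, $w_{12}$, $z_0$). A clean way to avoid sign ambiguity is to present $Y_3$ as the boundary of a linear plumbing / the quotient $\mathbb C^2 \setminus \{0\}$ by $\mathbf z \mapsto (\zeta \mathbf z_1, \zeta^{-1}\mathbf z_2)$ and note this is the canonical $L(3,-1)$; alternatively one can exhibit the Seifert data directly. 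Either way, once the eigenvalues of $RP$ are in hand the proof is immediate, so I would spend the bulk of the write-up making the eigenvalue extraction and the orientation conventions explicit and defer the lens-space classification to a citation.
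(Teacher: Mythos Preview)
Your proposal is correct and would yield a rigorous identification of $Y_3$, but it is considerably more elaborate than what the paper actually does. The paper's proof is a two-line argument: since $J$ is regular elliptic, its action on the unit sphere in $T_{w_3}\mathbf H^2_{\mathbb C}$ is free, so $Y_3$ is a genuine spherical $3$-manifold (no singular locus) with fundamental group $\mathbb Z_3$; hence it is a lens space $L(3,q)$. The paper does not compute the eigenvalue data of $J$ at $w_3$ at all --- it simply records the result as $L(3,-1)$ without further justification of the parameter $q$.

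The difference is this: you propose to extract the rotation pair $(a,b)$ from the eigenvalues of $RP$ and read off $q \equiv a b^{-1} \pmod 3$, which would genuinely pin down the oriented homeomorphism type. The paper sidesteps this entirely. That is defensible because for $p=3$ (and likewise for $p=4$ in the companion proposition) there is only one lens space up to unoriented homeomorphism: $L(3,1)$ and $L(3,-1)=L(3,2)$ are orientation-reversing homeomorphic, so the underlying manifold is determined by $\pi_1$ alone. Your remark that $L(3,1)$ and $L(3,-1)$ are ``genuinely distinct'' is therefore only true in the oriented category; as topological spaces they coincide, which is exactly why the paper's shortcut works. Your approach buys you the orientation (and would be necessary for larger $p$), at the cost of a linear-algebra computation the paper evidently did not deem necessary here.
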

\begin{proof} $Y_{3}$ is a  3-orbifold with fundamental group $\mathbb{Z}_3$. Since $J$ is a regular elliptic element, then $Y_{3}$ is a 3-manifold. That is, there is no singular set in $Y_{3}$. So $Y_{3}$ is the lens space $L(3,-1)$.
	
\end{proof}

\begin{figure}
	\begin{center}
		\begin{tikzpicture}
		\node at (0,0) {\includegraphics[width=11cm,height=8cm]{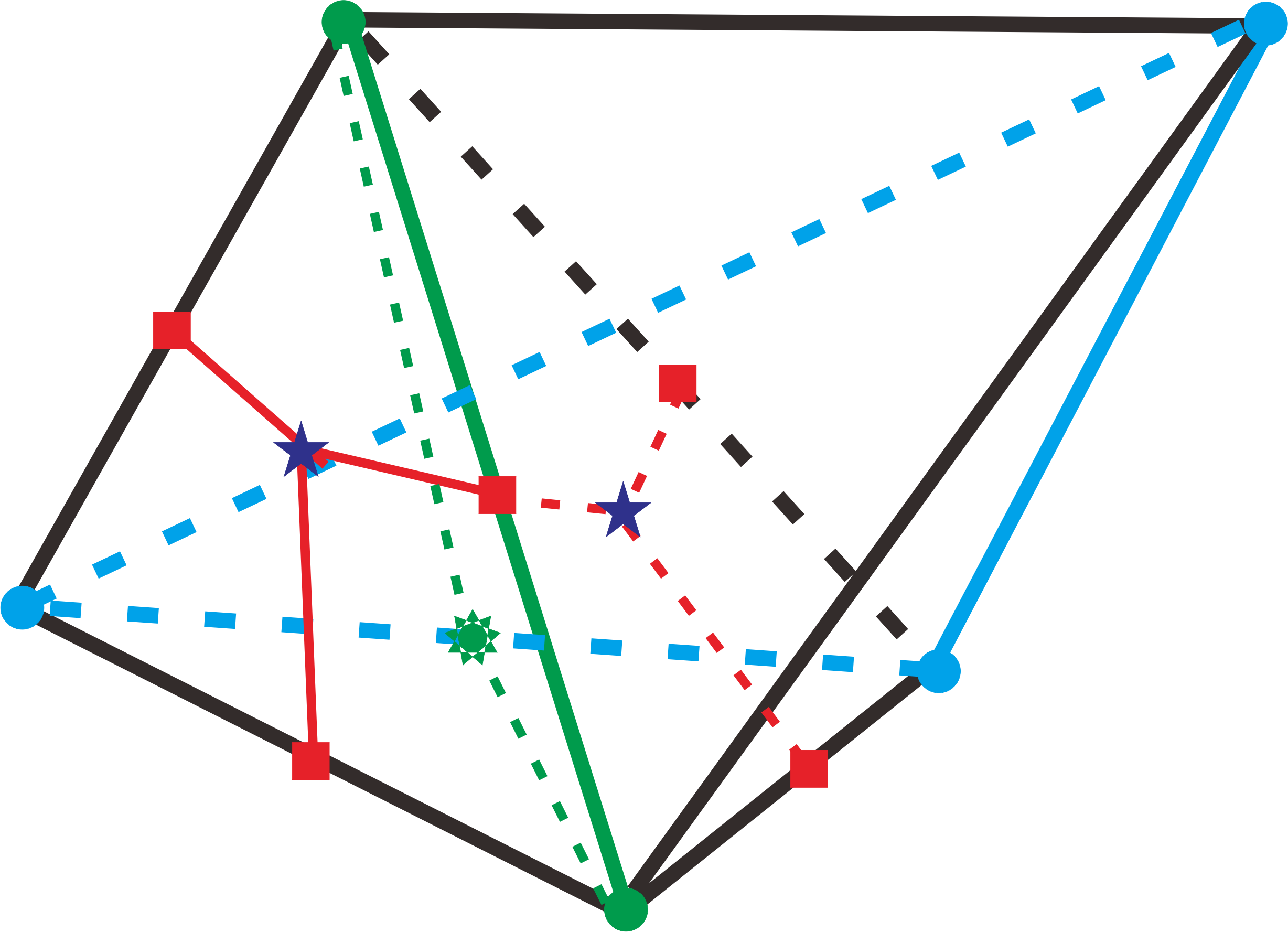}};
		
		\node at (5.1,4.3){\large $q_{\infty}$};
		
		\node at (-5.5,-1.7){\large $z_0$};
		\node at (0.5,-4.3){\large $z_1$};
		\node at (-2.0,4.2){\large $z_2$};
		\node at (3.0,-2.0){\large $z_3$};

		\node at (-2.9,0.6){\tiny $w_3$};
			\node at (0.5,-0.4){\tiny $P(w_3)$};
		\node at (-4.3,2.1){\large $w_4$};
		\node at (-3.0,-3.2){\large $J(w_4)$};
		\node at (-1.4,0.2){\Tiny $PJ(w_4)$};
		\node at (1.2,0.7){\Tiny $P^2J(w_4)$};
		\node at (1.9,-3.3){\large $P(w_4)$};
		\node at (-2.0,-1.2){\tiny $w_{12}$};

		\end{tikzpicture}
	\end{center}
	\caption{A fundamental domain $D$  of $\Gamma$,  with subdivisions of some of its faces to get new  fundamental domain $D^*$. Where each $f(w_3)$  is  a star-shaped vertex, and each  $f(w_4)$  is a  square-shaped vertex for certain $f$ in   $\rm PU(2,1; \mathbb{Z}[\omega])$.}
	\label{figure:fundamentaldomain}
\end{figure}

\subsection{Local topology around $[w_{4}]$}  \label{subsection:w4}

% $w_{4}=[-\frac{\sqrt{3}}{2}-\frac{ {\rm i}}{2}, \frac{1}{2}+\frac{ (2-\sqrt{3}){\rm i}}{2},1]^{\top}$.

We denote by $w_{4}$ the fixed point of $R_1R_2R_3$.
 It can be checked directly that  \begin{equation} \nonumber
w_4=\left[\begin{matrix}	-\frac{\sqrt{3}}{2}-\frac{ {\rm i}}{2} \\  \frac{1}{2}+\frac{ (2-\sqrt{3}){\rm i}}{2}  \\ 1 \end{matrix}\right].
\end{equation}

 With the geographical coordinates of the geodesic arc $\gamma_{02}$ on page 268 of \cite{falbelparker}, it can be showed $w_{4}$ lies in the geodesic from $z_0$
to $z_2$.  Let $\Gamma_{w_{4}}$ be the stabilizer  of $w_{4}$ in $\Gamma$. From  \cite{falbelparker} and \cite{deraux-Xu}, $\Gamma_{w_{4}}$ is generated by $R_1R_2R_3$  with order 4. 

 We take a small closed neighborhood  $N([w_{4}])$ of  $[w_{4}]$   in $M$. The vertex link of $N([w_{4}])$ in $M$ is  $Y_{4}$, which is a spherical 3-orbifold.  $N([w_{4}])$  is a cone on $Y_4$. So $Y_4$ completely determines the topology of $N([w_{4}])$.

\begin{prop} \label{prop:w12} $Y_{4}$ is the lens space $L(4,-1)$.
\end{prop}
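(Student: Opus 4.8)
The plan is to mimic exactly the proof of Proposition \ref{prop:w12} for $Y_3$, since the structure is parallel. First I would observe that $N([w_4])$ is a small closed ball around $w_4$ in $\mathbf{H}^2_{\mathbb C}$ quotiented by the stabilizer $\Gamma_{w_4} = \langle R_1R_2R_3\rangle \cong \mathbb Z_4$, so that $Y_4 = \partial N([w_4])$ is the quotient of a round $3$-sphere $S^3$ (the unit sphere in the tangent space $T_{w_4}\mathbf{H}^2_{\mathbb C}$, identified with $\mathbb C^2$) by the linearized action of the order-$4$ generator. Consequently $\pi_1(Y_4) \cong \mathbb Z_4$, and $Y_4$ is a spherical $3$-orbifold that is automatically a $3$-manifold provided the generator acts freely on $S^3$, i.e.\ provided $R_1R_2R_3$ is a \emph{regular} elliptic element (three distinct eigenvalues, so that the isotropy representation on $\mathbb C^2$ has no eigenvalue $1$ and no nonzero fixed vector). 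Thus the crux is to check regularity of $R_1R_2R_3$ at $w_4$, after which the classification of spherical space forms forces $Y_4$ to be a lens space $L(4,q)$.

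Next I would pin down the value $q$. Two routes are available. The clean geometric route: since $w_4$ lies on the geodesic $[z_0,z_2]$ and $R_1R_2R_3$ is the product of three complex reflections realizing the order-$4$ relation $(R_1R_2R_3)^4 = id$ from Theorem \ref{thm:grouppresentation}(2), I would compute the two eigenvalues of the isotropy action on $T_{w_4}\mathbf{H}^2_{\mathbb C} \cong \mathbb C^2$; these are primitive powers of a $4$th root of unity, say $\zeta_4^{a}$ and $\zeta_4^{b}$ with $a,b$ odd and $\{a,b\} \equiv \{1,3\} \bmod 4$ (forced by $\det = 1$ in $\mathrm{SU}(2,1)$ up to the cube-root ambiguity, and by regularity), which yields the lens space $L(4,-1) = L(4,3)$. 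The alternative purely computational route: take the explicit matrix $R_1R_2R_3$, conjugate so that $w_4 \mapsto$ a standard point, read off its eigenvalues on the normal $\mathbb C^2$, and identify the resulting $L(4,q)$. Either way the answer $L(4,-1)$ drops out; I would state it as in the $Y_3$ proof — "Since $R_1R_2R_3$ is a regular elliptic element, $Y_4$ is a $3$-manifold, hence the lens space $L(4,-1)$" — keeping the argument as short as the authors evidently intend.

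The main obstacle, such as it is, is the regularity check: one must verify that $R_1R_2R_3$ has three distinct eigenvalues (equivalently, that $w_4$ is an \emph{isolated} fixed point and not a fixed point lying on a fixed complex line or Lagrangian plane), since a repeated eigenvalue would make $Y_4$ an orbifold with $1$-dimensional singular locus rather than a lens space. This follows from the data already recorded — $w_4$ is stated to be the unique fixed point of $R_1R_2R_3$ and its isotropy group is exactly $\mathbb Z_4$ with no complex-reflection subgroup fixing a line through $w_4$ — so it is a matter of citing \cite{falbelparker} and \cite{deraux-Xu} together with a one-line eigenvalue computation, exactly as was done for $J$ at $w_3$. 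Once regularity is in hand, no further work is needed beyond invoking the spherical space form classification and matching $q = -1$ via the eigenvalue pair $(\zeta_4, \zeta_4^{-1})$ of the linearized action.
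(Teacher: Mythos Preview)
Your proposal is correct and follows essentially the same approach as the paper: observe that $\Gamma_{w_4}\cong\mathbb Z_4$, note that $R_1R_2R_3$ is regular elliptic so $Y_4$ has empty singular locus and is therefore a genuine $3$-manifold, and conclude $Y_4=L(4,-1)$. In fact you supply more justification than the paper does, since the paper simply asserts the value $q=-1$ without the eigenvalue discussion you sketch.
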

\begin{proof} $Y_{4}$ is a  3-orbifold with fundamental group $\mathbb{Z}_4$. Since $R_1R_2R_3$ is regular elliptic, then $Y_{4}$ is a 3-manifold. That is, there is no  singular set in $Y_{4}$ . So $Y_{4}$ is the lens space $L(4,-1)$.
\end{proof}

%We  note that $R_1R_2R_3=(R_1J)^3=(RQ)^3$.

\subsection{Local topology around $[w_{12}]$}  \label{subsection:w12}

We denote by $w_{12}$ the intersection of the reflection mirrors of the complex reflections  $PQ^{-1}$ and $R$.  It can be checked directly that
 \begin{equation} \nonumber
w_{12}=\left[\begin{matrix}	-1 \\ 0  \\ 1 \end{matrix}\right],
\end{equation}
 which lies in the interior of the  geodesic from $z_0$
to $z_3$.  Let $\Gamma_{w_{12}}$ be the stabilizer of $w_{12}$ in $\Gamma$. From  \cite{falbelparker} and \cite{deraux-Xu}, $\Gamma_{w_{12}}$ is generated by $PQ^{-1}$ and $R$  with order 12. Moreover, $PQ^{-1}$ and $R$ commute.

We take a small closed  neighborhood  $N([w_{12}])$ of  $[w_{12}]$   in $M$.  The vertex link of $N([w_{12}])$ in $M$ is  $Y_{12}$, which is a spherical 3-orbifold  with   $2\pi/2$-coned and   $2\pi/6$-coned singular sets.  $N([w_{12}])$  is a cone on $Y_{12}$. So $Y_{12}$ completely determines the topology of $N([w_{12}])$.

\begin{prop} \label{prop:w12} $Y_{12}$ is a 3-orbifold with underlying space the 3-sphere. The singular set is the Hopf link, one component is   $2\pi/2$-coned and the other one is  $2\pi/6$-coned.
\end{prop}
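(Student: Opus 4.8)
The strategy is to realize $Y_{12}$ as the unit sphere in $\mathbf{H}^2_{\mathbb C}$ at $w_{12}$ quotiented by the linearized action of $\Gamma_{w_{12}} = \langle PQ^{-1}, R\rangle \cong \mathbb Z_{12}$, and then identify this quotient as a Seifert-fibered space over $\mathbb S^2$. First I would set up local coordinates: since $PQ^{-1}$ and $R$ are both complex reflections whose mirrors pass through $w_{12}$, and these two mirrors are distinct complex lines through $w_{12}$, the tangent space $T_{w_{12}}\mathbf{H}^2_{\mathbb C}\cong\mathbb C^2$ decomposes as the direct sum of the two mirror directions. In suitable $\mathbb C$-linear coordinates $(\zeta_1,\zeta_2)$ adapted to this splitting, $R$ (order $2$) acts as $(\zeta_1,\zeta_2)\mapsto(\zeta_1,-\zeta_2)$ and $PQ^{-1}$ (order $6$) acts as $(\zeta_1,\zeta_2)\mapsto(\mu\,\zeta_1,\zeta_2)$ for a primitive $6$th root of unity $\mu$, because each complex reflection fixes its own mirror direction pointwise and rotates the normal direction. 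Since they commute, the group they generate is the diagonal $\mathbb Z_6\times\mathbb Z_2\cong\mathbb Z_{12}$ acting on $\mathbb C^2$ with weights $(1,0)$ and $(0,1)$ — but one must be careful: the combined cyclic generator $g=PQ^{-1}\cdot R$ has order $\mathrm{lcm}(6,2)=6$, so to get order $12$ the action of $\langle PQ^{-1},R\rangle$ is genuinely $\mathbb Z_6\times\mathbb Z_2$, and I should present $Y_{12}$ as $\mathbb S^3/(\mathbb Z_6\times\mathbb Z_2)$ where the first factor rotates one $\mathbb C$-factor and the second rotates the other.

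Next I would compute this quotient explicitly. Writing $\mathbb S^3=\{|\zeta_1|^2+|\zeta_2|^2=1\}$, the action of $\mathbb Z_6$ on the $\zeta_1$-coordinate alone (fixing $\zeta_2$ pointwise on the circle $\zeta_1=0$) and of $\mathbb Z_2$ on the $\zeta_2$-coordinate alone (fixing the circle $\zeta_2=0$) makes $\mathbb S^3/(\mathbb Z_6\times\mathbb Z_2)$ a Seifert fibration: quotient by the Hopf $\mathbb S^1$-action compatibly. Concretely, the circle $\{\zeta_2=0\}$ projects to a curve fixed by the $\mathbb Z_2$ subgroup, hence becomes a $2\pi/2$-coned circle; the circle $\{\zeta_1=0\}$ projects to a curve fixed by the $\mathbb Z_6$ subgroup, hence becomes a $2\pi/6$-coned circle; and away from these two circles the action is free. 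The underlying topological space is $\mathbb S^3/(\mathbb Z_6\times\mathbb Z_2)$ as a manifold quotient which, by the standard fact that $\mathbb S^3$ modulo a linear cyclic action on the two $\mathbb C$-factors is a lens space, is a lens space $L(12,q)$ for some $q$; identifying the two core circles of the genus-one Heegaard splitting shows they form exactly the two cores, which in $\mathbb S^3$ (and in any lens space, via the standard Heegaard torus) are a Hopf link. The claim that the underlying space is $\mathbb S^3$ itself (not a nontrivial lens space) follows because the $\mathbb Z_6$ and $\mathbb Z_2$ actions are on *different* coordinates with trivial interaction: $\mathbb S^3/(\mathbb Z_a\times\mathbb Z_b$ acting diagonally with weights $(1,0),(0,1))$ is again $\mathbb S^3$, since one can rescale each $\mathbb C$-factor's angular coordinate independently — the map $(\zeta_1,\zeta_2)\mapsto(\zeta_1^{\,?},\zeta_2^{\,?})$ is not holomorphic but the quotient of the *join* $\mathbb S^1 * \mathbb S^1$ by $(\mathbb Z_a)*(\mathbb Z_b)$ is $(\mathbb S^1/\mathbb Z_a)*(\mathbb S^1/\mathbb Z_b)=\mathbb S^1*\mathbb S^1=\mathbb S^3$.

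The main obstacle, and the step requiring the most care, is pinning down the precise linearized action — in particular verifying that $R$ and $PQ^{-1}$ act on *complementary* complex coordinate directions (so the quotient is $\mathbb S^3$ with a Hopf link singular locus) rather than having overlapping rotation eigenspaces (which would instead give a genuine lens space with a more complicated singular set, or coned singular set along a single circle). Concretely I would verify this by computing, for $R$ and $PQ^{-1}$, their eigenvalues and eigenvectors as elements of $\mathrm{SU}(2,1)$ restricted to the $2$-dimensional tangent space at $w_{12}=[-1,0,1]^{T}$: show that the repeated-eigenvalue eigenspace of $R$ (the mirror of $R$) coincides with the rotation (non-mirror) eigendirection of $PQ^{-1}$, and vice versa. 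Since $R=\mathrm{diag}$-like with the given matrix and $PQ^{-1}=R_1=QP^{-1}$ has an explicit matrix, this is a finite check; Proposition \ref{prop:reflection} already records that the relevant reflections commute, which is the algebraic shadow of the eigenspaces being a common orthogonal (with respect to the Hermitian form, hence Bergman-orthogonal at $w_{12}$) pair of complex lines. Once the weights are confirmed to be $(1,0)$ for the order-$6$ factor and $(0,1)$ for the order-$2$ factor, the join description above finishes the proof.
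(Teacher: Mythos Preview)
Your approach is correct and in fact more self-contained than the paper's. The paper's proof is a two-line appeal to Dunbar's classification of geometric $3$-orbifolds: it notes that $Y_{12}$ has orbifold fundamental group $\mathbb{Z}_2\oplus\mathbb{Z}_6$ and then points to the unique entry in Dunbar's Table 6 with cone parameters $f=2$, $g=6$. Your route instead linearises the action of $\Gamma_{w_{12}}$ on $T_{w_{12}}\mathbf{H}^2_{\mathbb C}$, verifies that the mirrors of $R$ and $PQ^{-1}$ are the two orthogonal complex lines through $w_{12}$ (so the weights are $(1,0)$ for the order-$6$ factor and $(0,1)$ for the order-$2$ factor), and then uses the join description $\mathbb{S}^3=\mathbb{S}^1*\mathbb{S}^1$ to read off both the underlying space and the singular locus directly. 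This buys you independence from the external classification and makes the Hopf-link structure transparent; the paper's version is shorter but leans entirely on \cite{dunbar}.

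Two small cleanups for a final write-up: first, $PQ^{-1}=R_1^{-1}$ rather than $R_1$ (though this is immaterial since they share a mirror and generate the same cyclic group); second, drop the intermediate passage about the quotient being ``a lens space $L(12,q)$'', since the group is not cyclic and that sentence is superseded by your (correct) join argument. The key verification you flag --- that the mirror of $R$ is the rotation axis of $PQ^{-1}$ and vice versa --- is indeed straightforward here: the polar vectors $(1,0,1)^T$ and $(0,1,0)^T$ are Hermitian-orthogonal, so the two complex lines through $w_{12}$ are perpendicular, and each reflection fixes one line while rotating along the other.
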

\begin{proof} $Y_{12}$ is a  3-orbifold with fundamental group $\mathbb{Z}_2 \oplus \mathbb{Z}_6$. It is  the  3-orbifold in Table 6 on page 89 of \cite{dunbar} with $f=2$ and $g=6$. Moreover, it can be showed directly this orbifold has  fundamental group $\mathbb{Z}_2 \oplus \mathbb{Z}_6$.
\end{proof}

\subsection{Local topology around $[z_{0}]$}  \label{subsection:z0}

%$z_0=[\frac{-1-{\rm i} \sqrt{3} }{2}, 0,1]^{\top}$,

Recall  \begin{equation} \nonumber
z_{0}=\left[\begin{matrix}	\frac{-1-{\rm i} \sqrt{3} }{2} \\ 0  \\ 1 \end{matrix}\right],
\end{equation}  which is the intersection of the reflection mirrors of $R_1$ and $R_2R_3R^{-1}_2$.
Let $\Gamma_{z_0}$ be the stabilizer of $z_0$ in $\Gamma$. From  \cite{falbelparker} and \cite{deraux-Xu}, $\Gamma_{z_0}$ is generated by $R_1, R_2J^{2}$ and $R_2R_3R^{-1}_2$ with order 72.  If we take $$a=R_1,~~b=R_2R_3R^{-1}_2,~~~ c=R_2J^2.$$ Then as an abstract group, $\Gamma_{z_0}$ is 
$$\left\langle a, b,c \big| \begin{array}  {c} a^6=b^6=c^{12}=id, \quad ab=ba, \quad cac^{-1}
=b,a^5b^5=c^{2}\end{array}\right\rangle. $$

We take a small closed  neighborhood  $N([z_{0}])$ of  $[z_{0}]$   in $M$.   The vertex link of $N([z_0])$ in $M$ is  $Y_{0}$, which is a spherical 3-orbifold  with   $2\pi/2$-coned and   $2\pi/6$-coned singular sets.
$N([z_{0}])$  is a cone on $Y_{0}$. So $Y_{0}$ completely determines the topology of $N([w_{0}])$.

\begin{prop} \label{prop:z0} $Y_{0}$ is a 3-orbifold with underlying space the lens space $L(3,2)$. The singular set is the union of two knots in $L(3,2)$, one component is   $2\pi/2$-coned and the other one is  $2\pi/6$-coned.
\end{prop}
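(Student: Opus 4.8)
The plan is to determine the vertex link $Y_0$ directly from the structure of the isotropy group $\Gamma_{z_0}$, which has order $72$ with the presentation given above. The general principle is the same one used in Propositions \ref{prop:w12}--\ref{prop:w12} for $w_3$, $w_4$ and $w_{12}$: the link of an isolated elliptic fixed point in $\mathbf{H}^2_{\mathbb C}/\Gamma$ is the quotient $S^3/\Gamma_{z_0}$, where $\Gamma_{z_0}$ acts on the unit sphere of the tangent space $T_{z_0}\mathbf{H}^2_{\mathbb C}\cong \mathbb C^2$ via its isotropy representation, a faithful representation into $\mathrm{U}(2)$. So the first step is to diagonalize (or at least understand the eigenvalue data of) the generators $a=R_1$, $b=R_2R_3R_2^{-1}$, $c=R_2J^2$ acting on $T_{z_0}$: $a$ and $b$ are complex reflections of order $6$, hence act with eigenvalues $(1,\zeta_6)$ and $(\zeta_6,1)$ (say) with respect to a common basis since they commute, while $c$ has order $12$ and conjugates $a$ to $b$, so it must swap the two eigenlines up to scalars; this pins down its eigenvalue structure once we use the relation $a^5b^5=c^2$. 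The second step is to identify the quotient $S^3/\Gamma_{z_0}$ topologically: since the group is a subgroup of $\mathrm{U}(2)$ acting freely away from the mirrors of the reflections, $Y_0$ is a spherical $3$-orbifold with singular locus coming exactly from the fixed circles of the reflections $a$, $b$ and their conjugates.

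I would organize the identification of $Y_0$ as follows. First pass to the normal subgroup $K\lhd\Gamma_{z_0}$ generated by all the complex reflections and their conjugates (the reflection subgroup); $S^3/K$ is again a lens space since $K$ is generated by pseudo-reflections of a unitary group, and the branch loci are the images of the reflection circles. Then $\Gamma_{z_0}/K$ acts on $S^3/K$ and one takes a further quotient. Concretely, because $a,b$ generate an abelian group $\mathbb Z_6\times\mathbb Z_6$ acting diagonally, the quotient by $\langle a,b\rangle$ alone is a lens-space-with-two-singular-circles picture, and then the order-$12$ element $c$ (modulo $\langle a,b\rangle$ it has order $2$) acts by swapping the two Hopf circles. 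Tracking how $c$ acts on the fundamental group $\mathbb Z_{36}$ of the lens space $S^3/\langle a,b\rangle$ and on its two exceptional fibers, and then computing $\pi_1(Y_0)$ — which should come out to the abelianization-type quotient consistent with $H_1(Y_0)=\mathbb Z_3$ forcing the underlying space to be $L(3,2)$ — gives the claim. The Seifert-fibered description of spherical $3$-orbifolds (every spherical $3$-orbifold is Seifert fibered over a spherical $2$-orbifold, cf. the tables in \cite{dunbar}) is the cleanest bookkeeping device: $Y_0$ fibers over $S^2(2,6,?)$ or a similar base, and the Euler number together with the Seifert invariants determines both the underlying lens space and the two cone circles.

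The main obstacle, I expect, is \emph{not} proving $Y_0$ is spherical (that is automatic, $\Gamma_{z_0}\subset\mathrm{U}(2)$ acts on $S^3$) nor that the singular set is two circles cone-angled $2\pi/2$ and $2\pi/6$ (this follows from reading off the orders of the reflections in $\Gamma_{z_0}$, noting the order-$4$ part and the order-$12$ part of the group act freely so contribute no branching, leaving only the order-$2$ and order-$6$ reflection mirrors), but rather pinning down that the underlying space is exactly $L(3,2)$ and that the two singular knots sit inside it in the asserted way. For this I would compute $\pi_1(Y_0)$ via Fox calculus or Reidemeister--Schreier from the presentation of $\Gamma_{z_0}$ modulo the normal closure of the reflection generators, verify $\pi_1 \cong \mathbb Z/3$, and then match the resulting Seifert invariants against Dunbar's classification \cite{dunbar} to read off that the total space of the orbifold is $L(3,2)$ with the two cone circles isotopic to the exceptional fibers. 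One should double-check the orientation/framing convention so that one gets $L(3,2)$ rather than $L(3,1)$; since $L(3,2)\cong L(3,-1)$, this is consistent with the lens space appearing for $w_3$, and the distinction is only a matter of the chosen generator of $H_1$.
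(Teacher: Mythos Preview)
Your broad strategy --- recognize $Y_0=S^3/\Gamma_{z_0}$ as a Seifert-fibered spherical $3$-orbifold and then read off the underlying lens space and the two singular fibers from a classification --- matches the paper's route. The execution differs in two respects. First, the paper pins down the Seifert base directly: it shows (via Magma) that $c^2$ generates the center $\mathbb{Z}_6\lhd\Gamma_{z_0}$ and obtains the exact sequence $1\to\mathbb{Z}_6\to\Gamma_{z_0}\to\pi^{orb}(S^2(2,2,6))\to1$, so the base is $S^2(2,2,6)$ rather than your tentative $S^2(2,6,?)$. Second, the paper matches $\Gamma_{z_0}$ against the Mecchia--Seppi tables \cite{Mecchia-Seppi2015,Mecchia-Seppi2019} (not Dunbar's) to identify the precise Seifert invariants $(\tfrac{6}{6},\tfrac{3}{2},\tfrac{4}{2})$; the cone orders $6$ and $2$ come from $\gcd(6,6)$ and $\gcd(4,2)$, and the underlying space is read off from the remaining invariant $\tfrac{3}{2}$. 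The paper closes with an explicit Seifert presentation for that orbifold and a direct isomorphism check against $\Gamma_{z_0}$.

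Your proposed two-step argument through the reflection subgroup $K$ has a gap. With $a=\mathrm{diag}(1,\zeta_6)$, $b=\mathrm{diag}(\zeta_6,1)$ in the tangent representation and $c$ anti-diagonal satisfying $c^2=a^5b^5=\zeta_6^{-1}I$, one computes that $ca$ has eigenvalues $\pm1$, so $ca$ is itself an order-$2$ complex reflection. Hence $K=\langle a,b,ca\rangle=\Gamma_{z_0}$ and the decomposition ``first mod out by $K$, then by $\Gamma_{z_0}/K$'' is vacuous; moreover your assertion that $S^3/K$ is automatically a lens space because $K$ is generated by pseudo-reflections is not correct as stated (Chevalley--Shephard--Todd gives $\mathbb{C}^2/K\cong\mathbb{C}^2$, which points to underlying space $S^3$, not a nontrivial lens space). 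The same computation, incidentally, is where the $2\pi/2$-coned circle actually comes from: your line about ``the order-$4$ part and the order-$12$ part act freely \dots leaving only the order-$2$ and order-$6$ reflection mirrors'' is right in spirit but needs the elements of type $ca$ made explicit to account for the order-$2$ mirrors. The cleaner path is the paper's: find the center, identify the base $2$-orbifold, and look up the Seifert data.
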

\begin{proof}Via Magma, we have $c^2$  generates the center of  $\Gamma_{z_0}$. Moreover, we have the exact sequence: $$1\rightarrow \mathbb{Z}_6 \rightarrow \Gamma_{z_0} \rightarrow \pi^{orb}(S^2(2,2,6))\rightarrow 1.$$	
Where $S^2(2,2,6)$ is the 2-orbifold on the 2-sphere with three coned points, and they are  $\mathbb{Z}_2$-coned, $\mathbb{Z}_2$-coned and $\mathbb{Z}_6$-coned respectively. 
	So $Y_{0}$ is a  Seifert spherical 3-orbifold  with  base 2-orbifold $S^2(2,2,6)$. Then it appeared in Table 4 in Page 837 of  \cite{Mecchia-Seppi2015}. Moreover, on page 1309 of \cite{Mecchia-Seppi2019}, it explained the relations between   Seifert invariants and the index of the singularity of a Seifert 3-orbifold. We then can check that $\Gamma_{z_0}$ is the group $(C_{12}/C_6, D^*_{24}/D^*_{12})$ in Family 4 of  Table 4 on page 837 of  \cite{Mecchia-Seppi2015} with $m=n=3$. The Seifert invariants of $Y_0$ are $(\frac{6}{6},\frac{3}{2},\frac{4}{2})$. Where $\frac{6}{6}$ and $\frac{4}{2}$ correspond to $2\pi/6$-coned and   $2\pi/2$-coned singular loci of $Y_0$ respectively (due to the fact $gcd(6,6)=6$ and $gcd(4,2)=2$).  The  underlying space of $Y_{0}$ is a  Seifert spherical 3-orbifold  based on the  2-orbifold $S^2(2)$  with invariant $\frac{3}{2}$, so it is the lens space $L(3,2)$.

	Moreover, we take a group  $$G=\left\langle c_1,c_2,c_3,h \big| \begin{array}  {c} h^6=c_1c_2c_3, \quad h^6=id, \quad c^2_1=h^3, \quad c^2_2=c^6_3=id
	\end{array}\right\rangle. $$
	Which is the orbifold fundamental group of a 3-orbifold $Y$ based on  2-orbifold $S^2(2,2,6)$ and Seifert invariants  $(\frac{6}{6},\frac{3}{2},\frac{4}{2})$. If we add the relation  $c_2=c_3=id$ in $G$, we get the group $\mathbb{Z}_3$, which is the fundamental group of  $L(3,2)$.  Moreover, it can be checked directly the group 
	$G$ is isomorphic to $\Gamma_{z_0}$.
	
	In total $Y_{0}$ is the 3-orbifold $Y$ as above.

	\end{proof}
\subsection{Local topology around $[q_{\infty}]$} 

We take a horosphere $H_{u}$ in  ${\bf H}^{2}_{\mathbb C}$ with  $u$ large enough,  $H_{> u}$ is the associated horoball. Then $\Gamma$ acts on  
$${\bf H}^{2}_{\mathbb C}- \mathop{\cup}_{\gamma \in \Gamma} \gamma \cdot  H_{ > u}$$
proper discontinuously, the quotient space is denoted by $M_{\leq u}$. We can  view $M_{\leq u}$ as the space as truncating the cusp of $M$. So $M_{\leq u}$ is a compact 4-orbifold with non-empty boundary,  and $M$ is homeomorphic to the interior of $M_{\leq u}$.

The boundary of $M_{\leq u}$, that is, the quotient space of $H_{u}$ by $\Gamma_{\infty}$,   is a  3-orbifold $Y_{\infty}$. The fundamental group of  $Y_{\infty}$ is  $\Gamma_{\infty}$. $H_{\geq u} /\Gamma_{\infty}$ is homeomorphic to $Y_{\infty} \times [0, \infty)$, so it can be viewed as the cone on  $Y_{\infty}$, but the cone point $q_{\infty}$ is deleted.

\begin{prop} \label{prop:zinfty} $Y_{\infty}$ is a 3-orbifold with underlying space the 3-sphere, and the singular set is the $2\pi/6$-coned trefoil  knot.
\end{prop}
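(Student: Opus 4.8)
\textbf{Proof proposal for Proposition \ref{prop:zinfty}.}

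The plan is to identify $Y_\infty = H_u/\Gamma_\infty$ by working directly with the explicit generators $P, Q$ of $\Gamma_\infty$ from Proposition \ref{prop:gammainfty} and the presentation $\Gamma_\infty = \langle P, Q \mid (QP^{-1})^6 = P^3Q^{-2} = id\rangle$. Since the horosphere $H_u$ is identified with the Heisenberg group $\mathcal{N} = \mathbb{C}\times\mathbb{R}$ and $\Gamma_\infty$ acts on it by Heisenberg isometries, $Y_\infty$ is a compact Heisenberg nilmanifold-type quotient, i.e. a closed Seifert-fibered 3-orbifold fibering over the Euclidean 2-orbifold $\mathcal{N}/(\text{translation part})$ via the vertical projection $\mathcal{N}\to\mathbb{C}$. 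First I would locate the center: the commutator subgroup (generated by vertical translations $[0,t]$) is central and infinite cyclic, and the quotient $\Gamma_\infty/\langle\text{vertical}\rangle$ acts on $\mathbb{C}$ as a 2-dimensional Euclidean crystallographic group. Computing the images of $P$, $Q$ (and $R_1 = QP^{-1}$) on $\mathbb{C}$, one reads off that this planar group is the $(2,3,6)$ triangle-type wallpaper group $p6$ (orientation-preserving, since all of $\rm PU(2,1)$ is), so the base 2-orbifold of the Seifert fibration is $S^2(2,3,6)$.

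Next I would compute the Seifert invariants / Euler number of the circle bundle $Y_\infty \to S^2(2,3,6)$. This is where the arithmetic of the Heisenberg cocycle enters: the failure of the rotational generators to have finite order upstairs (they pick up a vertical translation) is exactly measured by the nonzero area form $\mathrm{Im}(\bar z\zeta)$ in the Heisenberg product, and this gives a nonzero Euler number. The key point is that a Seifert 3-orbifold over $S^2(2,3,6)$ with the right (nonzero) Euler number is precisely the orbifold whose underlying space is $S^3$ with singular locus a $(2,3)$-torus knot, i.e. the trefoil — this is the standard Brieskorn-type description of the orbifold $\mathbb{S}^3/\widetilde{\Gamma}$ obtained from the Nil geometry / the link of the $E_8$-type or rather the $(2,3,6)$ cusp. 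I would pin down that only the $2\pi/6$ cone appears (not $2\pi/2$ or $2\pi/3$) by checking in $\Gamma_\infty$ which rotational generators actually have fixed points on $H_u$ itself: a Heisenberg rotation by $2\pi/k$ composed with a nontrivial vertical translation acts \emph{freely} on the horosphere unless the vertical translation is also killed, and from the relations $P^3Q^{-2} = id$ and $(QP^{-1})^6 = id$ one sees the order-2 and order-3 phenomena are "absorbed" (their would-be fixed loci are pushed off $H_u$), leaving only one genuine singular circle, of cone angle $2\pi/6$, which sits as the trefoil.

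A cleaner alternative I would actually carry out is to use the known classification of closed Euclidean 3-orbifolds and Nil 3-orbifolds: $Y_\infty$ is a closed orientable 3-orbifold with $\pi_1^{orb} = \Gamma_\infty$, and $\Gamma_\infty$ is (virtually) the integral Heisenberg group extended by a $\mathbb{Z}_6$ rotation, so $Y_\infty$ is a small Seifert-fibered orbifold modeled on $\widetilde{SL_2}$ or Nil; matching the presentation $\langle P, Q \mid (QP^{-1})^6 = P^3Q^{-2}\rangle$ against Dunbar's tables of Seifert-fibered 3-orbifolds (as the authors do for $Y_3, Y_4, Y_{12}, Y_0$) singles out the unique entry with base $S^2(2,3,6)$, no reflector curves, underlying space $S^3$, and singular set the $2\pi/6$ trefoil. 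The main obstacle is the explicit Seifert-invariant bookkeeping in step two — correctly computing the Euler number and the local cone data of the three exceptional fibers from the Heisenberg action, and in particular justifying that the order-$2$ and order-$3$ exceptional fibers of the \emph{base} orbifold do \emph{not} produce cone singularities in the total space (their isotropy acts freely on $H_u$ after incorporating the vertical translation), so that only the order-$6$ fiber survives as an honest singular locus; everything else is routine once the planar group is identified as $p6$.
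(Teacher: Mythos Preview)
Your proposal is correct and follows essentially the same route as the paper's first argument: identify the Seifert fibration of $Y_\infty$ over $S^2(2,3,6)$ coming from the vertical projection of the Heisenberg group, observe that $Y_\infty$ is a Nil 3-orbifold with a single $2\pi/6$-coned singular circle, and then match against Dunbar's tables. You supply more of the Heisenberg geometry (the $p6$ wallpaper group, the cocycle/Euler-number discussion, the analysis of which exceptional fibers actually produce cone points) than the paper does; the paper simply cites the extension $1\to\mathbb{Z}_6\to\Gamma_\infty\to\pi^{orb}(S^2(2,3,6))\to 1$ from \cite{falbelparker} and asserts the singular locus is a single $2\pi/6$ circle before invoking Dunbar.

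The paper also gives a second, independent verification that bypasses the Seifert bookkeeping entirely: it writes down the orbifold fundamental group of the candidate orbifold $Y$ (underlying space $S^3$, singular set the $2\pi/6$-coned trefoil) as $\langle x,y\mid x^6=1,\ xyx=yxy\rangle$, and exhibits an explicit isomorphism $f:\Gamma_\infty\to\pi^{orb}(Y)$ via $f(P)=y^{-1}x^{-1}$, $f(Q)=y^{-1}x^{-2}$ (with inverse $x\mapsto PQ^{-1}$, $y\mapsto Q^{-1}P$). This is shorter and more self-contained than the Euler-number computation you outline, and in particular it sidesteps the delicate check of which exceptional fibers survive as cone loci --- the step you yourself flagged as the main obstacle. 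On the other hand, your geometric argument explains \emph{why} the trefoil appears, whereas the explicit isomorphism merely confirms it.
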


\begin{proof} It is known $\Gamma_{\infty}$ is an extension of $\mathbb{Z}_6$ by $\pi^{orb}(S^2(2,3,6))$  \cite{falbelparker}. Where $S^2(2,3,6)$ is the 2-orbifold on the 2-sphere with three coned points, and they are  $\mathbb{Z}_2$-coned, $\mathbb{Z}_3$-coned and $\mathbb{Z}_6$-coned respectively. So $Y_{\infty}$ is a Seifert 3-manifold with   base $2$-orbifold $S^2(2,3,6)$. Moreover,  $Y_{\infty}$ is a 3-dimensional Nil-orbifold with exactly one component of singular set, it is a   $2\pi/6$-coned circle.
Dunbar 	 \cite{dunbar} classified all the 3-dimensional Nil-orbifold. There is exact one of them satisfies above conditions, that is, the left lower corner one in Table 2 on page 81 of 	 \cite{dunbar}.

In fact, 
if we take an orbifold $Y$ with underlying space the 3-sphere, and the singular set of $Y$ is the $2\pi/6$-coned trefoil knot. Then by \cite{Rolfsen},  the orbifold fundamental group  is $$\pi^{ord}(Y)=\left\langle x,y \big| \begin{array}  {c} x^6=id, ~~ xyx=yxy
\end{array}\right\rangle. $$
Take $$f: \Gamma_{\infty} \longrightarrow \pi^{ord}(Y),$$
with $f(P)=y^{-1}x^{-1}$ and $f(Q)=y^{-1}x^{-2}$. Then $f$ is an isomorphism  between $\Gamma_{\infty}$ and $\pi^{ord}(Y)$,   with $f^{-1}(x)=PQ^{-1}$ and $f^{-1}(y)=Q^{-1}P$.

So we proved in two ways $Y_{\infty}$ is the 3-orbifold $Y$ above. 
\end{proof}

\section{Topology  of Eisenstein-Picard modular surface $M$} \label{sec:EPmodularhandle}

In this section, after some preparations on orbifold handles, we subdivide Falbel-Parker's fundamental domain $D$ into a new one $D^{*}$. Based on side-pairings on $D^{*}$,   we prove Theorem \ref{thm:modulartop} in the last.

\subsection{An introduction to handle theory}\label{subsection:handle}

We give a sketch of handle theory following \cite{GompfStipsicz}. From now on, we assume all 4-orbifolds and  4-manifolds are orientable.

\begin{defn}\label{defn:handle} For $0 \leq k  \leq 4$, a 4-dimensional \emph{$k$-handle} $h$ is a copy of
	$\mathbb{D}^{k} \times \mathbb{D}^{4-k}$, attached to the boundary of an orientable smooth $4$-manifold $Y$ along $\partial{\mathbb{D}^{k}} \times \mathbb{D}^{4-k}$
	by an embedding $\phi:\partial\mathbb{D}^{k} \times \mathbb{D}^{4-k} \rightarrow \partial Y$, to get  an orientable smooth $4$-manifold $Y\cup_{\phi}h$.
\end{defn}

For a   4-dimensional $k$-handle $h$:
\begin{itemize}
	\item 	 $\mathbb{D}^{k} \times  \{0\}$ is called the \emph{core};
	\item  $ \{0\} \times \mathbb{D}^{4-k}$ is the \emph{cocore};
	
	\item $\phi$ is the \emph{attaching map};
	\item $\partial\mathbb{D}^{k} \times  \{0\}$ (or its image) is the \emph{attaching sphere};
	
	\item  $ \{0\} \times \partial\mathbb{D}^{n-k}$  is the \emph{belt
		sphere};
	\item  The number $k$ is called the \emph{index} of
	the handle.
\end{itemize}
Since  there is a deformation retraction of $Y\cup_{\phi}h$ onto $Y\cup_{\phi | _{\mathbb{D}^{k} \times  \{0\}}}\mathbb{D}^{k}\times  \{0\}$, we have that up to homotopy, attaching a $k$-handle is the same as attaching a $k$-cell.

%For a   4-dimensional $k$-handle $h$ with $k \geq 1$, then $int(\mathbb{D}^{k}) \times int(\mathbb{D}^{4-k})$ is called an \emph{open $k$-handle}. We introduce this terminology  just for the  convenience of our  proof. Note that each open $k$-handle  is adjoint to a closed $k$-handle.  By this we mean we never consider  an open $k$-handle alone, whenever we talk about an open $k$-handle, there is always  a closed $k$-handle $h$ such that the open $k$-handle is defined from it. 

A  4-dimensional $0$-handle is attached to $\partial Y$  along $\partial \mathbb{D}^{0} \times \mathbb{D}^{4}=\emptyset$,
so attaching a $0$-handle to a $4$-manifold is the same as taking the disjoint
union of $Y$ with  $\mathbb{D}^{4}$.

The attaching sphere of a $1$-handle is $\partial\mathbb{D}^{1} \times \{0\}$, which is a pair of points.
If $\partial Y$ is connected and nonempty, then there is a unique isotopy class of $\phi:\partial\mathbb{D}^{0} \times  \{0\} \rightarrow \partial X$ since we assume all the manifolds are orientable. If $\partial Y$ is disconnected, a  $1$-handle
attaching to two component of $\partial Y$ is just the boundary sum along two components of $\partial Y$.

Since $\pi_{2}(O(1))$ is the trivial group, there is a unique way to attach a 3-handle $\mathbb{D}^{3} \times \mathbb{D}^{1}$ along $\partial \mathbb{D}^{3} \times \mathbb{D}^{1}$ to  the boundary of a $4$-manifold $Y$.

A 4-handle $h$  is the same as  $\mathbb{D}^{4}$ attached along  $\partial\mathbb{D}^{4}=\mathbb{S}^{3}$.

As above,  4-dimensional $k$-handles are easy to understand, for $k=0, 1, 3$ and $4$. However, 2-handles are more complicated. For a 2-handle  $\mathbb{D}^{2} \times \mathbb{D}^{2}$, it is determined by an embedded  circle in   $\partial Y$ and an integer (so called ``framing"), they together corresponds to a Dehn surgery from $\partial Y$ to $\partial (Y\cup_{\phi}\mathbb{D}^{2} \times \mathbb{D}^{2})$, see Part II of \cite{GompfStipsicz}.

We give a  mild/trivial generalization of $k$-handles into orbifold $k$-handles.

First,  let  $L$ be a   closed orientable  3-orbifold with finite orbifold-fundamental group $G$. If  the singular set $\mathcal{S}$ of $L$ is non-empty, we  assume the singular set $\mathcal{S}$ is a disjoint union of circles (this is the case we will use in this paper).  So $L$ has  $\mathbb{S}^{3}$ as the universal cover, $L$ is a  \emph{$\mathbb{S}^{3}$-orbifold}. The cone on $L$  is a 4-orbifold $Y$, which can also be obtained from $\mathbb{D}^{4}$ by $G$-action with exactly one global  fixed point. Moreover, $Y$ has 2-dimensional singular set if and only if the singular set $\mathcal{S}$ of $L$ is non-empty.

\begin{defn}\label{defn:orbifold0handle}
	The cone $Y$ based on a  $\mathbb{S}^{3}$-orbifold $L$ is a 4-dimensional \emph{orbifold $0$-handle}. 
\end{defn}

%The 2-ball $\mathbb{D}^2$ can be  given by $$\{z \in \mathbb{C}: |z| \leq 1 \}.$$
%For each $n\in \mathbb{Z}_{+}$,  there is a smooth 	$\mathbb{Z}_{n}=\langle f \rangle$-action on the 2-ball $\mathbb{D}^2$, namely $$f(z)={\rm e}^{\frac{2 \pi {\rm i}}{n}} \cdot z . 
% The quotient space of  $\mathbb{D}^2$ by this $\mathbb{Z}_{n}$-action is a 

Recall the 2-orbifold $\mathcal{F}_n$  with the cone point $c_{n}$ in the introduction, which is just the quotient space of $\mathbb{D}^2$ by $\mathbb{Z}_n$.  So the product $\mathbb{D}^1 \times \mathcal{F}_n$ is a 3-orbifold, with underlying space a 3-ball and  there is a  $\mathbb{Z}_{n}$-coned proper arc it. Then  $\mathbb{D}^1 \times (\mathbb{D}^1 \times \mathcal{F}_n)$  can be viewed as a 4-orbifold with 2-dimensional singular set $\mathbb{D}^1 \times \mathbb{D}^1 \times \{c_{n}\}$.

%The 3-ball $\mathbb{D}^3$ can be  given by $$\{(z, t) \in \mathbb{C} \times \mathbb{R}: |z|^2+t^2 \leq 1 \}.$$
%For each $n\in \mathbb{Z}_{+}$,  there is a smooth 	$\mathbb{Z}_{n}=\langle f \rangle$-action on the 3-ball $\mathbb{D}^3$, namely $$f(z,t)=({\rm e}^{\frac{2 \pi {\rm i}}{n}} \cdot z, t). 
%$$ The quotient space of  $\mathbb{D}^3$ by  $\mathbb{Z}_{n}$-action is a 3-orbifold $\mathcal{F}_n$. The underlying space of $\mathcal{O}_n$ is also a 3-ball, but there is a  $\mathbb{Z}_{n}$-coned arc $l$ in  $\mathcal{O}_n$. 

\begin{defn}\label{defn:orbifold1handle} A 4-dimensional \emph{orbifold $1$-handle} $h$ is a copy of  $\mathbb{D}^1 \times (\mathbb{D}^1 \times \mathcal{F}_n)$, attached to the boundary of an orientable smooth $4$-orbifold $Y$ along $\partial \mathbb{D}^1 \times (\mathbb{D}^1 \times \mathcal{F}_n)$
	by an embedding $\phi:\partial \mathbb{D}^1 \times (\mathbb{D}^1 \times \mathcal{F}_n) \rightarrow \partial Y$, such that  $\partial \mathbb{D}^1 \times (\mathbb{D}^1 \times \mathcal{F}_n)$ are identified with two copies of $\mathbb{D}^1 \times \mathcal{F}_n$ in the boundary of $Y$,  to get  an orientable smooth $4$-orbifold $Y\cup_{\phi}h$.

\end{defn}

\begin{defn}\label{defn:orbifold2handle}
	 A 4-dimensional \emph{orbifold  $2$-handle} $h$ is a copy of  $\mathbb{D}^2 \times  \mathcal{F}_n$, attached to the boundary of an orientable smooth $4$-orbifold $Y$ along $\partial \mathbb{D}^2 \times \mathcal{F}_n$
	by an embedding $\phi:\partial \mathbb{D}^2 \times \mathcal{F}_n \rightarrow \partial Y$, such that  $\partial \mathbb{D}^2 \times \mathcal{F}_n$ is identified with a copy of $\mathbb{S}^1 \times \mathcal{F}_n$ in the boundary of $Y$,  to get  an orientable smooth $4$-orbifold $Y\cup_{\phi}h$.
 
\end{defn}

\subsection{Equivalent classes of faces of the fundamental polytope $D$}\label{subsection:equaclassD}

Since  the fundamental polytope $D$ constructed by Falbel-Parker is a 4-simplex, we will denote a $k$-face of $D$ by its $(k+1)$ vertices. For example, $[z_0, z_1, z_3, q_{\infty}]$ is a 3-face of $D$. 
It is not difficult to get the  equivalent classes of faces of $D$ under  the action of  $\Gamma=\rm PU(2,1; \mathbb{Z}[\omega])$.

\begin{enumerate}
	\item There are two classes of vertices of $D$  under  the action of  $\Gamma$.  $D$ has five vertices $z_0$,  $z_1$,  $z_2$,  $z_3$ and $q_{\infty}$.
	Since $z_n=P^{n}(z_0)$ for $n=1,2,3$, $z_0$,  $z_1$,  $z_2$ and  $z_3$  are equivalent.  $q_{\infty}$ is an ideal vertex, which  is not    equivalent to any other $z_i$.

		\item There are three classes of edges of $D$  under  the action of $\Gamma$. $D$ has six finite edges $[z_i,  z_j]$ for $0 \leq i< j \leq 3$.  Five of them 
		 are equivalent, say any two of $[z_i,  z_j]$ for  $\{i,j\} \neq \{0,3\}$ are equivalent.
		 $[z_0,  z_3]$ is not   equivalent to any other edges.
		 $D$ has  four infinite edges $[z_i,  q_{\infty}]$ for $0 \leq i \leq 3$, any two of them are equivalent.

			\item There are four classes of ridges of $D$  under  the action of $\Gamma$. The ridges $[z_0, z_1,  z_2]$ and  $[z_3, z_1,  z_2]$ are in the same class. The ridges $[z_0, z_1,  z_3]$ and  $[z_0, z_2,  z_3]$ are in the same class. 
			 Any two $[z_{i}, z_{j},   q_{\infty}]$ are in the same class  for  $\{i,j\} \neq \{0,3\}$.
			  The ridge $[z_0, z_3,  q_{\infty}]$ is the only one in its class.

				\item   There are three classes of 3-faces of $D$  under  the action of  $\Gamma$. The 3-faces $[z_0, z_1,  z_2, q_{\infty}]$ and  $[z_1, z_2,  z_3, q_{\infty}]$ are in the same class. The 3-faces
				$[z_0, z_2,  z_3, q_{\infty}]$ and  $[z_0, z_1,  z_3, q_{\infty}]$ are in the same class. The 3-face
				$[z_0, z_1,z_2, z_3]$ is the only one in its class. 
				
	\end{enumerate}

\subsection{Subdivide  the fundamental polytope $D$  get a new  polytope $D^*$.}

From the fundamental polytope $D$ of $\Gamma$ constructed by Falbel-Parker, we add more vertices, edges and ridges, to subdivide some of its facets to get a new  polytope $D^*$. The polytope $D^*$ is crucial in our proof of Theorem \ref{thm:modulartop}.

We add eight new vertices in  the interiors of some edges or ridges of $D$ for the new  polytope $D^*$:
\begin{enumerate}
	
	\item We add a new vertex $w_4$ on  the edge $[z_0,z_2]$. Now the  edge $[z_0,z_2]$ is divided into two edges $[z_0,w_4]$ and $[z_2,w_4]$.

	 Since the edges   $[z_0,z_1]$, $[z_1,z_2]$, $[z_2,z_3]$, $[z_1,z_3]$ are in the same class of $[z_0,z_2]$, each of these edges contains a vertex  which is equivalent to $w_4$. They are $$J(w_4),~~PJ(w_4),~~P^2J(w_4), ~~P(w_4)$$  respectively.	In turn, we subdivide each of $[z_0,z_1]$, $[z_1,z_2]$, $[z_2,z_3]$, $[z_1,z_3]$ into two edges by one of these vertices. 
	 
	%   $J(w_4)$, $PJ(w_4)$, $P^2J(w_4)$ and  $P(w_4)$.
	 
	 	By the side-pairing maps and the map $J$,   we can find that each sub-edge in the edges $[z_0,z_1]$, $[z_1,z_2]$, $[z_2,z_3]$, $[z_1,z_3]$ is equivalent to $[z_0,w_4]$ or $[z_2,w_4]$.

	\item We add a  vertex $w_{3}$ in the interior of the triangular ridge $[z_0,z_2,z_1]$.

	Since the ridge $[z_0,z_1,z_2]$ is paired to the ridge $[z_1,z_2,z_3]$ by $P$,  the ridge $[z_1,z_2,z_3]$ contains the corresponding vertex $P(w_3)$ in its interior.
	\item We add a  vertex $w_{12}$ on the edge $[z_0,z_3]$.   The  edge $[z_0,z_3]$ is divided into two edges $[z_0,w_{12}]$ and $[z_3,w_{12}]$.  Note that the map $R$ interchanges the edges $[z_0,w_{12}]$  and $[w_{12},z_3]$.

\end{enumerate}
See Figure 	\ref{figure:fundamentaldomain} for these vertices. 

We introduce more edges in the interiors of some  ridges of $D$ for the new  polytope $D^*$.
\begin{enumerate}
	\item We add an edge $[z_2,w_{12}]$ in the ridge  $[z_0,z_2,z_3]$, which is a geodesic arc connecting 
	$z_2 $ and $w_{12}$.
	
	Since the ridge $[z_0,z_2,z_3]$ is paired to the ridge $[z_0,z_1,z_3]$ by $R^{-1}_1$, there is an edge $[z_1,w_{12}]$ in the ridge  $[z_0,z_1,z_3]$, which is equivalent to $[z_2,w_{12}]$ by $R^{-1}_1$.

	\item   We add an edge $[w_3,w_4]$ in the ridge  $[z_0,z_1,z_2]$, which is a geodesic arc connecting 
	$w_3$ and $w_{4}$.
	
	 Recall that $J(w_3)=w_3$,  $J(z_0)=z_1$,  $J(z_1)=z_2$ and  $J(z_2)=z_0$. There are also two edges in  the ridge  $[z_0,z_1,z_2]$, which are  images of the edge $[w_3,w_4]$ by $J$ and $J^2$. We denote them by $[w_3,J(w_4)]=J([w_3,w_4])$ and   $[w_3,PJ(w_4)]=J^2([w_3,w_4])$.

\end{enumerate}

We also introduce   new ridges for the new  polytope $D^*$.
\begin{enumerate}
\item  The triangular ridge $[z_0,z_1,z_2]$ in $D$ is divided into three 
       quadrilaterals in the polytope $D^*$, which are cyclically permuted by the map $J$. One of  them has vertices $z_2$, $w_4$, $w_3$ and $PJ(w_4)$, so we denote this quadrilateral by  $[z_2,w_4,w_3, PJ(w_4)]$.
       
        Similarly,  the ridge $[z_1,z_2,z_3]$ in $D$ is divided into three 
        quadrilaterals in the polytope $D^*$.  Note that the six
       quadrilaterals obtained  are equivalent under  the action of  $\rm PU(2,1; \mathbb{Z}[\omega])$.

 \item  The triangular ridge $[z_0,z_2,z_3]$ in $D$ is divided into two
triangles in the polytope $D^*$ by the edge $[z_2,w_{12}]$.  One of  them has vertices $z_0$, $z_2$, and $w_{12}$, so we denote this triangle by  $[z_0, z_2, w_{12}]$.  The map $R$ interchanges this two triangles.

 Similarly,  the ridge $[z_0,z_1,z_3]$ in $D$ is divided into two triangles
 in the polytope $D^*$. Note that the four sub-triangles of $[z_0,z_1,z_3]$ and $[z_0,z_2,z_3]$ are equivalent under  the action of  $\rm PU(2,1; \mathbb{Z}[\omega])$.
 
 \item We add a new ridge $[z_2, z_1,w_{12}]$  in the new  polytope $D^*$.  By Proposition  \ref{prop:reflection}, each of $z_2$,  $z_1$ and $w_{12}$ is fixed by the $\mathbb{C}$-reflection $R$, so we let $[z_2,z_1,w_{12}]$  be the geodesic triangle with vertices  $z_2$ $z_1$ and  $w_{12}$ in the mirror of  $R$.  $[z_2,z_1,w_{12}]$ is the green triangle in 	Figure \ref{figure:fundamentaldomain} with vertices $w_{12}$, $z_1$ and $z_2$.

\end{enumerate}

At last we introduce more  3-cells for the new  polytope $D^*$.

The 3-cell $[z_0,z_1,z_2,z_3]$ in $D$ is divided into two parts by the ridge $[z_1, z_2,w_{12}]$. One of  them has vertices $z_0$, $z_1$, $z_2$ and $w_{12}$ (and $w_3$, $w_4$, $J(w_4)$, $PJ(w_4)$), so we denote this 3-cell by  $$[w_{12}, z_0, z_1, z_2, (w_3, w_4, J(w_4), PJ(w_4))].$$  The map $R$ interchanges these two 3-cells.

	Now we denote by $D^{*}$ the polytope which is obtained from $D$ by subdivide some of its $k$-face for $k \leq 3$:
	
\begin{enumerate}
	\item the underlying topological spaces of $D$ and $D^{*}$ are the same. 
	
		\item There are  four infinite  3-faces  and two finite 3-faces of $D^{*}$.
		
		 For example we denote one of the infinite  3-faces of $D^{*}$ by $[q_{\infty}, z_0,z_1,z_2,(w_3, w_4, J(w_4),PJ(w_4))]$. 
		The underlying topological space of $[q_{\infty}, z_0,z_1,z_2,(w_3, w_4, J(w_4),PJ(w_4))]$ is the  underlying topological space of the  3-face $[q_{\infty}, z_0,z_1,z_2]$ of $D$, but we subdivide the  2-face $[z_0,z_1,z_2]$ into three   quadrilaterals. Moreover, due to the added new vertices, the remaining three  2-faces are now viewed as  three   quadrilaterals.
		 See the left sub-figure of Figure  \ref{figure:3faceDstar},  which is
		 a  3-face of $D^{*}$.
		 
		 Similarly,   $[q_{\infty}, z_1,z_2,z_3,(P(w_3), P(w_4), PJ(w_4),P^2J(w_4))]$ is an  infinite  3-face of $D^{*}$. 
		 There is also a 3-face denoted  by $$[q_{\infty}, z_0,z_2,z_3,(w_4, P^2J(w_4), w_{12})].$$
		 The underlying topological space of  $$[q_{\infty}, z_0,z_2,z_3,(w_4, P^2J(w_4), w_{12})]$$ is the  underlying topological space of the  3-face $[q_{\infty}, z_0,z_2,z_3)]$ of $D$, but we subdivide the  2-face $[z_0,z_2,z_3]$ into three   triangles. Moreover, due to the added new vertices, the remaining three  2-faces are now viewed as  three   quadrilaterals.
		 See the right sub-figure of Figure  \ref{figure:3faceDstar}, which 
		 is a  3-face of $D^{*}$.
		 
		 Similarly,   $[q_{\infty}, z_0,z_1,z_3,(w_4, J(w_4), w_{12})]$ is an  infinite  3-face of $D^{*}$. 
		 One of the finite   3-faces of $D^{*}$ is denoted  by $$[w_{12}, z_0,z_1,z_2,(w_3, w_4, J(w_4),PJ(w_4))].$$
		 The  underlying topological space of it  is the  underlying topological space of the  3-face $[w_{12}, z_0,z_1,z_2]$, which is exactly half of the 3-face  $[z_3, z_0, z_1, z_2]$ of  $D$, but we subdivide the  2-face $[z_0,z_1,z_2]$ into three   quadrilaterals. Moreover, due to the added new vertices, the remaining three  2-faces are now viewed as  three   quadrilaterals. The combinatoric of $$[w_{12}, z_0,z_1,z_2,(w_3, w_4, J(w_4),PJ(w_4))]$$
		is the same as the combinatoric of $$[q_{\infty}, z_0,z_1,z_2,(w_3, w_4, J(w_4),PJ(w_4))],$$ we only need to change $q_{\infty}$ to $w_{12}$.
		 
		 Similarly,   $[w_{12}, z_1,z_2,z_3,(P(w_3), P(w_4), PJ(w_4),P^2J(w_4))]$ is  a finite  3-face of $D^{*}$.

			\item   $k$-faces of $D^{*}$ for $0 \leq k \leq 2$ are now obvious, we omit the details.	
\end{enumerate}

% By the gluing pattern, the 3-cell $[z_0,z_1,z_2,q_{\infty}]$ is equivalent  to $[z_1,z_2,z_3,q_{\infty}]$ and  the 3-cell $[z_0,z_3,z_2,q_{\infty}]$ is equivalent  to $[z_0,z_3,z_1,q_{\infty}]$.

\begin{figure}
	\begin{center}
		\begin{tikzpicture}
		\node at (0,0) {\includegraphics[width=11cm,height=4.5cm]{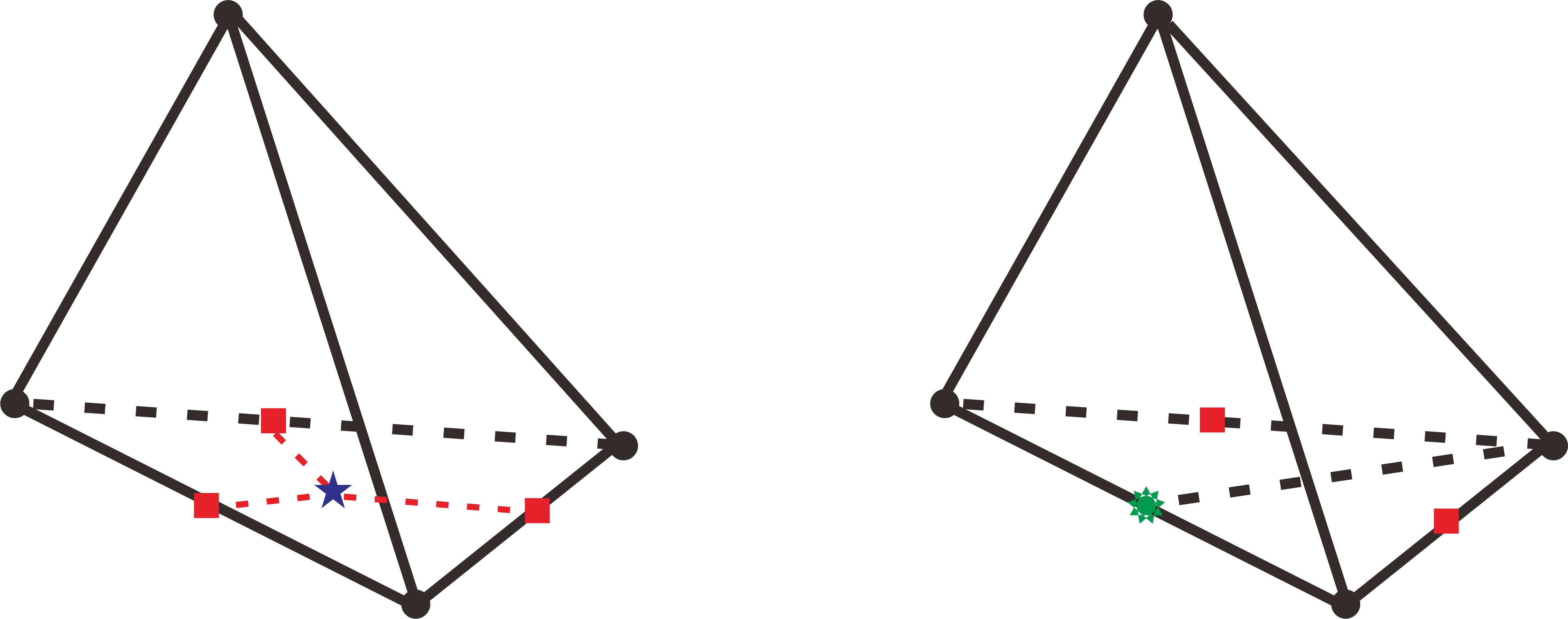}};
		
\node at (-0.6,-0.5){\large $z_0$};
		\node at (-2.3,-2.5){\large $z_1$};
		\node at (-5.4,-1.3){\large $z_2$};
		\node at (-3.2,2.3){\large $q_{\infty}$};
		\node at (-3.6,-0.4){\tiny $w_4$};
				\node at (-1.5,-1.9){\tiny $J(w_4)$};
		\node at (-4.0,-1.9){\tiny $PJ(w_4)$};
			\node at (-3.6,-1.2){\tiny $w_3$};
			
			\node at (0.9,-0.3){\large $z_0$};
			\node at (5.8,-1.3){\large $z_2$};
			\node at (4.4,-2.5){\large $z_3$};
			\node at (1.7,2.3){\large $q_{\infty}$};
			\node at (2.7,-0.4){\tiny $w_4$};
			\node at (5.1,-1.9){\tiny $P^2J(w_4)$};
			\node at (2.5,-1.8){\tiny $w_{12}$};
		
		\end{tikzpicture}
	\end{center}
	%\caption{Two infinite 3-faces $[q_{\infty},z_0,z_1,z_2,(w_3, w_4, J(w_4),PJ(w_4))]$ and $[q_{\infty},z_0,z_2,z_3,(w_4, P^2J(w_4),w_{12})]$ of the fundamental domain $D^*$.}
\caption{Two infinite 3-faces  of the fundamental domain $D^*$.}
	\label{figure:3faceDstar}
\end{figure}

\subsection{Side-pairings on the new  polytope $D^*$.}

From the combinatoric of $D^{*}$ above and the gluing-pattern of the fundamental domain $D$, we  have the side-parings of $D^{*}$ as follows:

%\begin{gather}

%\nonumber x + y + z = 0 \\
%\nonumber y - z = 1

%\end{gather}

\begin{enumerate}

\item 	\begin{gather}
\nonumber  P: [q_{\infty}, z_0, z_1, z_2, (w_3, w_4, J(w_4), PJ(w_4))]\longrightarrow  \\
\nonumber  [q_{\infty}, z_1, z_2, z_3, (P(w_3), P(w_4), PJ(w_4), P^2J(w_4))];
	\end{gather}
	\item  		\begin{gather}
	\nonumber  PQ^{-1}: [q_{\infty}, z_0, z_2, z_3, (w_4, P^2J(w_4), w_{12})]\longrightarrow 
	  \\
	\nonumber  [q_{\infty}, z_0, z_1, z_3, (J(w_4), P(w_4), w_{12})];
	\end{gather}

	\item		\begin{gather}
	\nonumber R: [w_{12}, z_0, z_1, z_2, (w_3, w_4, J(w_4), PJ(w_4))]\longrightarrow 
	  \\
	 \nonumber [w_{12}, z_3, z_1, z_2, (P(w_3), P^2J(w_4),P(w_4), PJ(w_4))].
	\end{gather}

\end{enumerate}

We then  consider tessellation about ridges.
We use  $[q_{\infty},z_2,z_0,(w_4)]$ to denote the   quadrilateral in $D^{*}$ which has the same underlying topological space as  $[q_{\infty},z_2,z_0]$ in $D$, but with a new vertex $w_4$ added to it.

(1).  The   ridge cycle involved $[q_{\infty},z_2,z_0,(w_4)]$ is
\begin{flalign}
\nonumber [q_{\infty},z_2,z_0,(w_4)]  \xrightarrow{P} [q_{\infty},z_3,z_1,(P(w_4))] \xrightarrow{QP^{-1}}  \\ 
\nonumber [q_{\infty},z_3,z_2,(P^2J(w_4))]  \xrightarrow{P^{-1}}  [q_{\infty},z_2,z_1,(PJ(w_4))]  \\ 
\nonumber  \xrightarrow{P^{-1}}  [q_{\infty},z_1,z_0,(J(w_4))]\xrightarrow{QP^{-1}}  [q_{\infty},z_2,z_0,(w_4)];
\end{flalign}
%Which gives the relation $P^3=Q^2$;

Similarly, 

(2). The second  ridge cycle is
\begin{flalign}
\nonumber [z_2, w_4, w_3, (PJ(w_4))]  \xrightarrow{P} [z_3, P(w_4),  P(w_3), (P^2J(w_4))] \xrightarrow{R^{-1}}\\\nonumber [z_0, J(w_4), w_3, (w_4)] \xrightarrow{P}  [z_1, PJ(w_4), P(w_3), (P(w_4))]\xrightarrow{R^{-1}} \\\nonumber   [z_1, PJ(w_4), w_3, (J(w_4))] \xrightarrow{P}  [z_2, P^2J(w_4), P(w_3), (PJ(w_4))] \\\nonumber \xrightarrow{R^{-1}} [z_2, w_4, w_3, (PJ(w_4))];
\end{flalign}

(3). The third   ridge cycle is
\begin{flalign}
\nonumber [z_0, w_{12}, z_2,(w_4)]  \xrightarrow{PQ^{-1}} [z_0, w_{12}, z_1, (J(w_4))] \xrightarrow{R} \\
 \nonumber   [z_3, w_{12}, z_1, (P(w_4))]   \xrightarrow{QP^{-1}}  [z_3, w_{12}, z_2 ,(P^2Jw_4)]  \xrightarrow{R^{-1}}  [z_0, w_{12}, z_2, (w_4)];
\end{flalign}
%Which gives the relation $P^3=Q^2?????$;

(4). The fourth  ridge cycle is
\begin{flalign}
\nonumber [z_1,z_2,w_{12}]  \xrightarrow{R} [z_1,z_2, w_{12}];
\end{flalign}
%Which gives the relation $R^2=id$;

(5). The fifth ridge cycle is
\begin{flalign}
\nonumber [q_{\infty},z_0,z_3, (w_{12})]  \xrightarrow{PQ^{-1}} [q_{\infty}, z_0, z_3, (w_{12})].
\end{flalign}

From Subsection \ref{subsection:equaclassD} and the ridge circles of $D^{*}$, we can get Table  \ref{table:equaclassDstar}, which list the equivalence classes of  $k$-dimension faces of $D^{*}$ under the 
 action of  $\Gamma=\rm PU(2,1; \mathbb{Z}[\omega])$.

%\begin{table}[!htbp]
%	\caption{The equivalence classes of  $k$-dimension faces of $D^{*}$.}
%	\centering
%	\begin{tabular}{c|p{1.6cm}|p{2cm}|p{5cm}}
%		\toprule
%		\textbf{Dimension} & \textbf{Number}& \textbf{ Equivalence class with $q_{\infty}$ }  & \textbf{Equivalence class without $q_{\infty}$}\\
%		\midrule
%		0&5 &  $[q_{\infty}]$, & $[z_0]$, $[w_{3}]$, $[w_{4}]$, $[w_{12}]$\\
%		\hline
%		1&5&   $[z_0, q_{\infty}]$ & $[z_0,w_{12}]$, $[z_1,w_{12}]$, $[z_0,w_4]$,   $[w_3,w_4]$\\
%		\hline
%		2&5& $[z_0,w_4, z_2, q_{\infty}]$, $[z_0,w_{12}, z_3,  q_{\infty}]$ & $[z_0,w_4,w_3,J(w_4)]$,  $[z_0,J(w_4), z_1,w_{12}]$, $[z_1,w_{12},z_2, w_4]$\\
%		\hline
%		3&3  &  $[q_{\infty}, z_0, z_1, z_2, (w_3, w_4, J(w_4), PJ(w_4))]$  \newline $[q_{\infty}, z_0, z_2, z_3, (w_4, P^2J(w_4), w_{12})]$ \newline
%		$ [w_{12}, z_0, z_1, z_2, (w_3, w_4, J(w_4), PJ(w_4))]$
%		\\
%		\hline
%		4&1 &  $[z_0,z_1,z_2,z_3,q_{\infty}]$ & \\
%		\bottomrule
%	\end{tabular}
	
%	\label{table:equaclassDstar}
%\end{table}

\begin{table}[h]
		\caption{The equivalence classes of  $k$-dimension faces of $D^{*}$.}
	\label{table:equaclassDstar}
	\centering
	\begin{tabular}{l|l|c}
		\hline
		
		\textbf{Dimension} &  \textbf{Number}      & \textbf{Equivalence class}    \\
		
			\hline
		\multirow{2}*{0} & \multirow{2}*{5}  & $[q_{\infty}]$ \\
		\cline{3-3}
		&        & $[z_0]$, $[w_{3}]$, $[w_{4}]$, $[w_{12}]$  \\
		\hline
	\multirow{2}*{1} & \multirow{2}*{5}  &   $[z_0, q_{\infty}]$ \\
	\cline{3-3}
	&        & $[z_0,w_{12}]$, $[z_1,w_{12}]$, $[z_0,w_4]$,   $[w_3,w_4]$  \\
	\hline
	
	\multirow{2}*{2} & \multirow{2}*{5}  & $[z_0,w_4, z_2, q_{\infty}]$, $[z_0,w_{12}, z_3,  q_{\infty}]$\\
	\cline{3-3}
	&        & $[z_0,w_4,w_3,J(w_4)]$,  $[z_0,J(w_4), z_1,w_{12}]$,   \\
	&        &    $[z_1,w_{12},z_2, PJ(w_4)]$ \\
	\hline
	
	\multirow{2}*{3} & \multirow{2}*{3}  &  $[q_{\infty}, z_0, z_1, z_2, (w_3, w_4, J(w_4), PJ(w_4))]$, \\  &        & $[q_{\infty}, z_0, z_2, z_3, (w_4, P^2J(w_4), w_{12})]$   \\
	\cline{3-3}
	&        &  	$[w_{12}, z_0, z_1, z_2, (w_3, w_4, J(w_4), PJ(w_4))]$  \\
	\hline
{4} & {1}  &  $[z_0,z_1,z_2,z_3,q_{\infty}]$ \\
	
	\hline
	
	\end{tabular}

\end{table}

We note that in Table 	\ref{table:equaclassDstar}, the edges $[z_0,w_{12}]$ and $[z_1,w_{12}]$ are in different classes. $[z_0,w_{12}]$ is a sub-edge of the edge $[z_0,z_{3}]$ of $D$, but  $[z_1,w_{12}]$ lies in the interior of a ridge of $D$.

	\subsection{A handle structure of the Eisenstein-Picard modular surface $M$}\label{subsection:handleEPmodular}

	We now consider the global topology of the Eisenstein-Picard modular surface $M$.

We take a horosphere $H_{u}$ in  ${\bf H}^{2}_{\mathbb C}$ with  $u$ large enough,  $H_{> u}$ is the associated horoball. Then $\Gamma$ acts on  
$${\bf H}^{2}_{\mathbb C}- \mathop{\cup}_{\gamma \in \Gamma} \gamma \cdot  H_{ > u}$$
  proper discontinuously, the quotient space is denoted by $M_{\leq u}$. We can  view $M_{\leq u}$ as the space as truncating the cusp of $M$. So $M_{\leq u}$ is a compact 4-orbifold with non-empty boundary,  and $M$ is homeomorphic to the interior of $M_{\leq u}$. Then  $M_{\leq u}$ is a compact core of $M$.

On the other hand, $M$ is the quotient space of  $$[z_0,z_1,z_2,z_3, q_{\infty}]-q_{\infty}$$ by side-parings.  Topologically,  $[z_0,z_1,z_2,z_3, q_{\infty}]-q_{\infty}$ is homeomophic to  $[z_0,z_1,z_2,z_3] \times [0,\infty)$, we fix such a homeomorphism.   In particular,  $[z_0,z_1,z_2,z_3, q_{\infty}]-q_{\infty}$   is homeomophic to its subset  $[z_0,z_1,z_2,z_3] \times [0,t)$ for any $t >0$. Moreover, the homeomorphism  is $\Gamma$-invariant.
In particular, if we consider side-parings on $[z_0,z_1,z_2,z_3] \times [0,t]$,  which are restrictions of side-parings on $[z_0,z_1,z_2,z_3, q_{\infty}]-q_{\infty}$. So  $M_{\leq u}$  is homeomophic to the quotient space of  $[z_0,z_1,z_2,z_3] \times [0,t]$  by side-parings for any $t>0$.

 Consider the tetrahedron  $[z_0,z_1,z_2,z_3]$ in ${\bf H}^{2}_{\mathbb C}$, the quotient space of it is a 3-dimensional CW-complex $Y$ in $M$. We should note some 2-cells of $Y$ are totally geodesic in $M$, which are crucial for the topology of $M$. 
We take a  closed  neighborhood $N$ of   $Y$ in $M$, which is the quotient space of $[z_0,z_1,z_2,z_3]\times [0,\epsilon]$ for $\epsilon >0$ small enough. So  $M_{\leq u}$ is homeomorphic to  $N$, and then   $N$ is a compact core of $M$.

We now consider the topology of $N$.  We first sketch the process, then we will give more details later. Since  $N$ can be viewed as the union  of small   closed  neighborhoods $N(c)$ in $M$ for each cell $c$  in Table 	\ref{table:equaclassDstar} without $q_{\infty}$ as  one of  its vertices. That is, we may take $c$ be any of  $$[z_0], ~~[w_3],~~[w_4],~~ [w_{12}],$$ 
$$[z_0,w_{12}],~~[z_1,w_{12}],~~[z_0,w_4],~~[w_3,w_4],$$  $$[z_0,w_4,w_3,J(w_4)],~~[z_0,J(w_4), z_1,w_{12}],~~[z_1,w_{12},z_2, w_4]$$ and 	$$[w_{12}, z_0, z_1, z_2, (w_3, w_4, J(w_4), PJ(w_4))].$$
By this we mean we first take  small   closed  neighborhoods 
$$N([z_0]), ~~N([w_3]),~~N([w_4]),~~N([w_{12}]),$$  in $M$. We will view them as orbifold 0-handles.

 Then we take  small   closed  neighborhoods $$N([z_0,w_{12}]),~~N([z_1,w_{12}]),~~N([z_0,w_4]),~~N([w_3,w_4])$$
 of edges  $[z_0,w_{12}],~~[z_1,w_{12}],~~[z_0,w_4],~~[w_3,w_4]$ in $M$.
 But for technical reasons,  what we really need are $$N([z_0,w_{12}])-int(N([z_{0}])\cup N([w_{12}])),$$
$$N([z_1,w_{12}])- int(N([z_1]) \cup N([w_{12}])),$$  $$N([z_0,w_4])-int(N([z_0])\cup N([w_4])),$$ and $$N([w_3,w_4])-int(N([w_3])\cup N([w_4]))$$ respectively. We will denote them by $$h([z_0,w_{12}]), ~~h([z_1,w_{12}]),~~h([z_0,w_{4}]),~~ h([w_3,w_4]).$$ We will view them as orbifold 1-handles. But for the simplicity and transparency of the topology of $M$,  we will  assimilate the handles $$h([z_1,w_{12}]),~~h([z_0,w_{4}]),~~ h([w_3,w_4])$$ into the product of a pair of  pants and a 2-orbifold  $\mathcal{F}_2$. 

Similarly,   we take a  small   closed  neighborhood $$N([z_1,w_{12},z_2, PJ(w_4)])$$ 
of $[z_1,w_{12},z_2, PJ(w_4)]$ in $M$.
What we really need is the complement of  $$int(N([z_{0}])\cup N([w_{4}]\cup  N([w_{12}]) \cup h([z_1,w_{12}]) \cup h([z_0,w_{4}]) \cup  h([w_3,w_4]))$$ in
$N([z_1,w_{12},z_2, PJ(w_4)])$. 
It will give us  the product of a    pair of pants and a 2-orbifold  $\mathcal{F}_2$.

\textbf{Step 1.} From Section \ref{sec:localmodel},  each of $w_3$, $w_4$, $w_{12}$ and $z_0$ is the fixed point of an isotropy group of order $3$, $4$, $12$ and $72$ respectively.
So  the closed neighborhoods of $[z_0]$,  $[w_3]$, $[w_4]$ and   $[w_{12}]$ in $M$ are 4-orbifolds with finite orbifold-fundamental groups. We  view each of the  closed neighborhoods as  orbifold 0-handles $h(w_3)$, $h(w_4)$,  $h(w_{12})$ and $h(z_0)$ respectively.
	 Recall      the topologies of these orbifold 0-handles in Subsections  \ref{subsection:w3}, \ref{subsection:w4}, \ref{subsection:w12} and  \ref{subsection:z0}.

\textbf{Step 2.}  The  edge  $[w_3,w_4]$ in $Y$ is easy for our process, since there is no singular set in the interior of it.  We take a sub-interval of $[w_3,w_4]$, say  $[w_3,w_4]- int(h(w_3) \cup h(w_4))$.  The complement of  $h(w_3) \cup h(w_4)$ in a closed neighborhood of $[w_3,w_4]$  in $M$  is  a 1-handle  $h([w_3,w_4])$. Where  $[w_3,w_4]- int(h(w_3) \cup h(w_4))$  corresponds to $\mathbb{D}^1$-factor of  the 1-handle $\mathbb{D}^1 \times \mathbb{D}^3$ attached to  orbifold 0-handles $h(w_3)$ and $ h(w_4)$.  
Moreover, we should remark that 	attaching sphere of   $h([w_3,w_4])$ is disjoint from the singular set of  $h(w_3) \cup h(w_4)$.

We then consider the    orbifold 1-handle  $h([z_0,w_{12}])$, which is a little complicated.

We denote by $\mathbb{C}_6$  the $\mathbb{C}$-line fixed by $PQ^{-1}$.  The intersection of $\mathbb{C}_6$  and the fundamental domain $D$  is the cyan  triangle  $[z_0,z_3, q_{\infty}]$ in Figure 	\ref{figure:fundamentaldomain} with vertices $z_0$, $z_3$ and $q_{\infty}$.
 Note that the vertex $w_{12}$ lies in the interior of  the edge $[z_0,z_3]$, and  the involution $R$ identifies  $[z_0,w_{12}]$ and $[z_3, w_{12}]$. So the quotient space of  $$[z_0,z_3, q_{\infty}]-(z_0 \cup z_3 \cup w_{12}\cup  q_{\infty})$$ in $M$ is a pair of  totally geodesic pants $P_6$, which is a $\mathbb{Z}_6$-coned singular set. See Figure 	\ref{figure:hz0w12}, we first delete the four vertices  $z_0, z_3,  w_{12}$ and $q_{\infty}$ of the triangle in left sub-figure of Figure 	\ref{figure:hz0w12}, then  identifying the two open edges $(z_0,w_{12})$ and  $(z_3,w_{12})$, we get a topological pants.

   Consider the red path and the green path in   Figure 	\ref{figure:hz0w12}, which give two isotopic curves in $P_6$.  Moreover, the red path in   Figure 	\ref{figure:hz0w12} is the intersection $H_{u}\cap [z_0,z_3, q_{\infty}]$, and the green  path in   Figure 	\ref{figure:hz0w12} is the intersection
  $N\cap [z_0,z_3, q_{\infty}]$. 
 Then we consider  neighborhoods $N_{M_{\leq u}}(P_6)$ and  $N_{N}(P_6)$ of $P_6$ in $M_{\leq u}$ and  $N$ respectively. So $N_{M_{\leq u}}(P_6)\cap P_6 $ and $N_{N}(P_6)\cap P_6$ are homeomorphic by homeomorphism between $M_{\leq u}$ and $N$. 
   In other words, consider the homeomorphism between $M_{\leq u}$ and $N$,  we  delete the region co-bound by the red and green curves in $P_6$. 
    So what we shall do is gluing the two disks in the left sub-figure of  Figure 	\ref{figure:hz0w12} co-bounded by the green and cyan arcs (and orange, purple arcs). But in fact, these two disks are glued together into a bigger disk, say $E$,  as  in the right sub-figure of  Figure 	\ref{figure:hz0w12}.
   
    The complement of  $int(h(w_3) \cup h(w_4) \cup h(w_{12}))$ in a closed neighborhood of $P_6$  in $M$  is  an orbifold  1-handle  $h([w_{12},z_0])$. That is, gluing a copy of  $\mathbb{D}_1^1 \times \mathbb{D}^1_2 \times \mathcal{F}_6$ to the 4-orbifold we have got. Where the disk $E$ corresponds to $\mathbb{D}_1^1 \times \mathbb{D}^1_2 \times \{c_6\}$, and  $c_6$ is the coned point of  $\mathcal{F}_6$. Moreover, the two orange and purple arcs indicate $\partial \mathbb{D}_1^1 \times \mathbb{D}^1_2$.

	We remark that 	attaching the orbifold 1-handle  $h([z_0,w_{12}])$  needing more carefully, by this we mean that the $\mathbb{Z}_6$-coned singular set of  $h([z_0,w_{12}])$ must be glued to  $\mathbb{Z}_2$-coned singular sets of $h(w_4)$ and $h(w_{12})$.

	\begin{figure}
		\begin{center}
			\begin{tikzpicture}
			\node at (0,0) {\includegraphics[width=11cm,height=4cm]{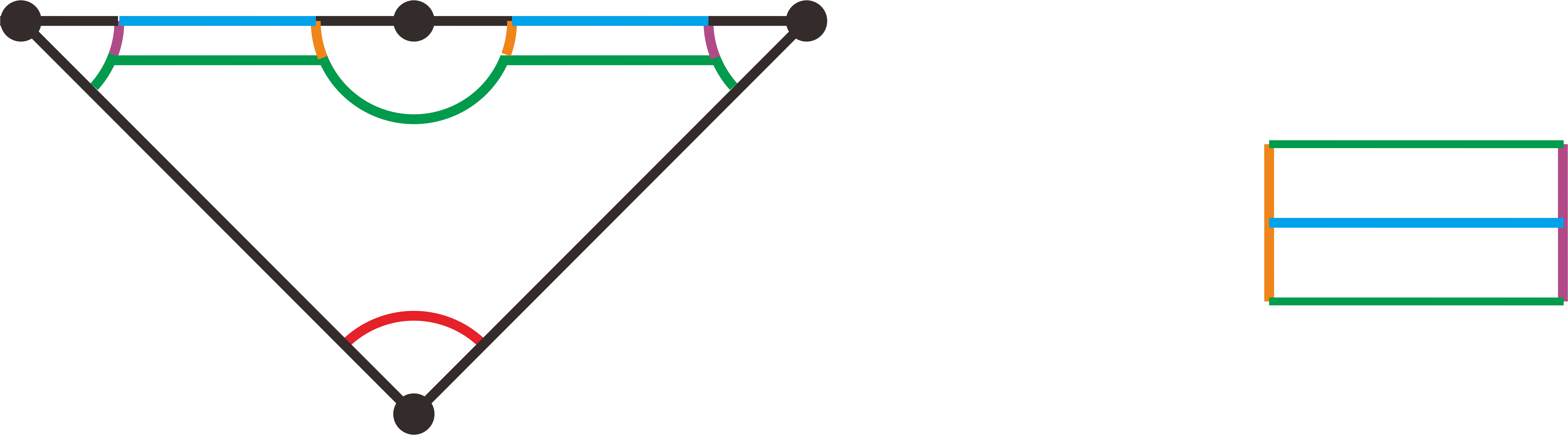}};

			\node at (-5.2,2.3){\large $z_0$};
			\node at (0.5,2.3){\large $z_3$};
			\node at (-1.89,2.3){\large $w_{12}$};
			\node at (-1.9,-1.9){\large $q_{\infty}$};
			\end{tikzpicture}
		\end{center}
		\caption{The totally geodesic triangle $[z_0,z_3,q_{\infty}]$ gives the orbifold 1-handle $h([z_0,w_{12}])$.}
		\label{figure:hz0w12}
	\end{figure}

\textbf{Step 3.}  Attach the product of a  pair of   pants and  $\mathcal{F}_2$, namely $h([z_0,w_4,w_{12}])$, 
	to get a 4-orbifold with 0-dimensional and  2-dimensional singular sets.

We denote by $\mathbb{C}_2$  the $\mathbb{C}$-line fixed by $R$.  The intersection of $\mathbb{C}_2$  and the fundamental domain $D$  is the green  triangle $[z_0,z_2,w_{12}]$ in Figure 	\ref{figure:fundamentaldomain} with vertices $z_1$, $z_2$ and $w_{12}$.   Note that the vertex $PJ(w_4)$ lies in the interior of  the edge $[z_1,z_2]$. Moreover, $PQ^{-1}$  identifies  $[z_1,w_{12}]$ and $[z_2, w_{12}]$. 

	So the quotient space of  $$[z_1,z_2, w_{12}]-(z_1 \cup z_2 \cup w_{12}\cup  PJ(w_{4})$$ in $M$ is a pair of  totally geodesic pants $P_2$, which is a $\mathbb{Z}_2$-coned singular set. Which is similar to  the pair of pants $P_6$.

	The complement of  $int(h(w_4) \cup h(w_{12}) \cup h(z_{0}))$ in a closed neighborhood of $p_6$  in $M$  is    the product of a pair of pants and  $\mathcal{F}_2$, we denote it by $h([z_0,w_4,w_{12}])$. Where $P_2-(h(w_4) \cup h(w_{12}) \cup h(z_{0}))$ is the pair of  pants here.

We remark that   $h([z_0,w_4,w_{12}])$  is a $\mathbb{Z}_2$-coned orbifold. So the singular set of  it  must be glued to  $\mathbb{Z}_2$-coned singular sets of $h(w_4)$, $h(w_{12})$ and  $h(z_{0})$. 
	
In other words, we  view the union of closed neighborhoods of   $[z_0,w_4]$,  $[z_1,w_{12}]$ and  $[z_0,w_4,w_{12}]$  as a whole object $h([z_0,w_4,w_{12}])$. So  $[z_0,w_4]$ and  $[z_1,w_{12}]$ do not contribute new orbifold 1-handles anymore.

\textbf{Step 4.}  Attach two  2-handles  $$h([z_0,w_4,w_3,J(w_4)]) \quad \text{and} \quad h([z_0,J(w_4), z_1,w_{12}]).$$

The complement of  $int(h(w_4) \cup h(w_{12}) \cup h(z_{0}))$ in a closed neighborhood of  $[z_0,w_4,w_3,J(w_4)]$  in $N$  is    the product of two 2-disks. So it gives a 2-handle in $N$. Where $$[z_0,w_4,w_3,J(w_4)]-int(h(w_4) \cup h(w_{12}) \cup h(z_{0}))$$ is the disk $\mathbb{D}^2_1$ in the 2-handle $\mathbb{D}^2_1 \times \mathbb{D}^2_2$. 

 Similarly,  the complement of  $int(h(w_4) \cup h(w_{12}) \cup h(z_{0}))$ in a closed neighborhood of  $[z_0,J(w_4), z_1,w_{12}]$  in $N$  is    the product of two 2-disks, and  it gives a 2-handle in $N$. Where $$[z_0,J(w_4), z_1,w_{12}]-int(h(w_4) \cup h(w_{12}) \cup h(z_{0}))$$ is the disk $\mathbb{D}^2_1$ in the 2-handle $\mathbb{D}^2_1 \times \mathbb{D}^2_2$.

We remark that   attaching 2-handles  $$h([z_0,J(w_4), z_1,w_{12}) \quad  \text{and} \quad h([z_0,w_4,w_3,w_{12}])$$ must be along two well-chosen  curves on the 4-orbifold obtained after Step 3. These curves are hidden in the side-pairing pattern of the  fundamental domain $D^*$ of  ${\bf H}^2_{\mathbb C}/\rm PU(2,1; \mathcal{O}_3)$.
	
\textbf{Step 5.}  At last we  attach a 3-handle  $$h([w_{12}, z_0, z_1, z_2, (w_3, w_4, J(w_4), PJ(w_4))]).$$

The complement of  $int(h(w_3) \cup h(w_4) \cup h(w_{12}) \cup h(z_{0}))$ in a closed neighborhood of  $[w_{12}, z_0, z_1, z_2, (w_3, w_4, J(w_4), PJ(w_4))]$  in $N$  is    the product of $\mathbb{D}^3 \times \mathbb{D}^1$. So it gives a 3-handle in $N$. Where $$[w_{12}, z_0, z_1, z_2, (w_3, w_4, J(w_4), PJ(w_4))]-(h(w_3) \cup h(w_4) \cup h(w_{12}) \cup h(z_{0}))$$ is the disk $\mathbb{D}^3$ in the 3-handle.

The reader may prefer to Figure \ref{figure:EPmodular} for the procedure to get the topology of $M$ in Theorem \ref{thm:modulartop}.
This ends the proof of  Theorem \ref{thm:modulartop}.

%So the homeomorphism can be made in the  way  concordanting with the side-parings of $[z_0,z_1,z_2,z_3, q_{\infty}]-q_{\infty}$ and  $[z_0,z_1,z_2,z_3]\times [0,1)$. 
% So  the quotient $[z_0,z_1,z_2,z_3]\times [0,\epsilon)$ 

%We take a suitable neighborhood $N$ (say more on this, for example from group invariant balls of isolated points $w_i$ and $z_i$) of $[z_0,z_1,z_2,z_3]$ in $M$, then the quotient space of $N$  is just $M- cusp \times (0,\infty)$ (say more).

\end{document}